\newcommand{\PH}{\hbox{PH}}
\newcommand{\CH}{\hbox{$C^{free}_4$H}}
\newcommand{\NTH}{\hbox{NTH}}
\newtheorem{theorem}{Theorem}
\newtheorem{definition}[theorem]{Definition}
\newtheorem{lemma}[theorem]{Lemma}
\newtheorem{corollary}[theorem]{Corollary}
\newenvironment{proof}{\noindent\textit{Proof.}}{$\Box$}
\newtheorem{myclaim}{Claim}
\newenvironment{proofclaim}{\noindent { \smallskip \it \underline{Proof}.} }{(\it \underline{End of the claim}.\smallskip)}
\renewcommand{\subsection}{\@startsection{subsection}{2}%
  {\z@}%
  {.7\linespacing\@plus\linespacing}%
  {.5\linespacing}%
  {\normalfont\scshape\centering}}
\begin{document}

\begin{frontmatter}

\title{A fast algorithm to remove proper and homogenous pairs of cliques\\ (while preserving some graph invariants)}

\author[rvt]{Y. Faenza}
\ead{faenza@math.unipd.it}
\author[focal]{G. Oriolo\fnref{fn1}}
\ead{oriolo@disp.uniroma2.it}
\author[focal]{C. Snels}
\ead{snels@disp.uniroma2.it}

\fntext[fn1]{Corresponding address: Dipartimento di Informatica, Sistemi e Produzione, Facolt\`a di Ingegneria, Via del Politecnico 1, 00133 Rome, Italy.}

\address[rvt]{Dipartimento di Matematica Pura e Applicata, Universit\`a di Padova, Padua, Italy}
\address[focal]{Dipartimento di Informatica, Sistemi e Produzione, Universit\`a di Roma Tor Vergata, Rome, Italy}

\begin{abstract}
We introduce a family of reductions for removing proper and homogeneous pairs of cliques from a graph $G$. This family generalizes some routines presented in the literature, mostly in the context of claw-free graphs. These reductions can be embedded in a simple algorithm that in at most $|E(G)|$ steps builds a new graph $G'$ without proper and homogeneous pairs of cliques, and such that $G$ and $G'$ agree on the value of some relevant invariant (or property).
\end{abstract}

\begin{keyword}
Proper and homogeneous pairs of cliques; Reductions; Graph invariants.
\end{keyword}

\end{frontmatter}

\section{Introduction}\label{intro}

A pair of vertex-disjoint cliques $\{K_1, K_2\}$ is \emph{homogeneous} if every vertex that is neither in $K_1$, nor in $K_2$ is either adjacent to all vertices from $K_1$, or non-adjacent to all of them, and similarly for $K_2$. Homogeneous pairs of cliques were first defined in the context of bull-free graphs \cite{CSb}, and seem to play a non-trivial role in combinatorial, structural and polyhedral properties of claw-free graphs. For instance, a well-known decomposition result by Chudnovsky and Seymour is as follows:

\begin{theorem}\label{thr:Ch-Seym-struct-cf-1} \cite{CS_Survey} For every connected claw-free graph G with $\alpha(G)\geq 4$, if $G$ does not admit a $1$-join and
there is no homogeneous pair of cliques in $G$, then either $G$ is a circular interval
graph, or $G$ is a composition of linear interval strips, XX-strips, and antihat strips.
\end{theorem}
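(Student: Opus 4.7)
The theorem is a structural decomposition result from the Chudnovsky–Seymour program on claw-free graphs, and any realistic proof plan must invoke substantial machinery developed in that program. My overall strategy would be strong induction on $|V(G)|$, with the inductive step driven by a case analysis on what "elementary" local configuration $G$ contains. Before touching the induction at all, I would verify that the listed output classes (circular interval graphs, and compositions of linear interval strips, XX-strips, and antihat strips) are closed under the operations arising from the induction, and that their members really do avoid 1-joins and homogeneous pairs of cliques whenever $\alpha \geq 4$; otherwise the conclusion cannot be invariant under the inductive construction.

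For the inductive step, I would appeal to a more general Chudnovsky–Seymour decomposition theorem: every claw-free graph either belongs to one of several explicit basic families (e.g., antiprismatic, near-antiprismatic, graphs from the icosahedron, line graphs of certain bipartite multigraphs, circular interval graphs) or admits one of a short list of decompositions (1-join, W-join, proper 2-join, hex-join, general homogeneous pair, \ldots). The assumption $\alpha(G) \geq 4$ kills most of the exotic basic families, since antiprismatic and near-antiprismatic graphs have bounded independence number; only circular interval graphs and line-graph-like constructions can survive, and the latter are captured by the strip compositions. Simultaneously, our hypothesis excludes the 1-join and the homogeneous pair of cliques from the decomposition list.

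The main obstacle, and the bulk of the work, is showing that once those two decompositions are forbidden, the remaining items in the Chudnovsky–Seymour list collapse into the strip composition form stated here. Specifically, a W-join or a non-clique homogeneous pair must, under our hypotheses, already exhibit $G$ as glued from two strips of the admissible types; a proper 2-join similarly identifies a linear interval or XX-strip along its interface. Carrying this out rigorously requires a careful analysis of the interface vertices on each side of such a decomposition, showing that they induce a structure compatible with one of the three strip families, and that the two sides can be certified as (smaller) strips by induction. Writing down this reduction in detail is essentially the content of \cite{CS_Survey} and the preceding papers in the series; I would not attempt to reproduce it from scratch, but rather to quote the general decomposition and then verify, case by case, that no other output is compatible with our two exclusions together with $\alpha(G) \geq 4$.
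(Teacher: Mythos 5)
This statement is not proved in the paper at all: it is quoted verbatim from \cite{CS_Survey} as background motivation, and the authors explicitly decline even to define the classes and operations it mentions. So there is no in-paper proof to compare your attempt against. Judged on its own terms, your proposal is a reasonable description of how the result is actually established in the Chudnovsky--Seymour program (induction via a more general structure theorem for claw-free graphs, using $\alpha(G)\geq 4$ to eliminate the antiprismatic-type basic classes, and using the exclusion of $1$-joins and homogeneous pairs of cliques to prune the decomposition list), but it is not a proof: the two load-bearing steps --- the general decomposition theorem itself, and the case analysis showing that the surviving decompositions collapse into the stated strip compositions --- are invoked rather than carried out, and you say so yourself. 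That is a genuine gap if the task was to prove the theorem; it is the appropriate response if the task was to treat it as the cited external result that the paper itself takes it to be. One small caution: your preliminary claim that the output classes must themselves avoid $1$-joins and homogeneous pairs of cliques is not needed for the implication as stated (the theorem only asserts membership in the output classes under the hypotheses, not a converse), so that verification step, while harmless, is not part of what has to be shown.
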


See \cite{CS_Survey} for the definition of graphs and operations involved in Theorem \ref{thr:Ch-Seym-struct-cf-1}: we skip them, since they are of no use for the present paper. What is interesting to us is the fact that homogeneous pair of cliques are somehow an \emph{annoying} structure: as it is written in \cite{CS_Survey}, "There is also a ''fuzzy''
version of this (\emph{i.e. Theorem \ref{thr:Ch-Seym-struct-cf-1}}), without the hypothesis that there is no homogeneous pair of cliques
in G, but it is quite complicated". (This more complex version of the theorem is actually given in \cite{CS_global}.) A similar situation can be found in the structure theorem on Berge graphs \cite{CS_perfect}.

\smallskip
In the literature, some effort has been devoted to design {\em reduction techniques} to get rid of homogeneous pairs of cliques that are also \emph{proper}. We say that a pair of cliques $\{K_1, K_2\}$ is \emph{proper} if each vertex in $K_1$ is neither complete nor anticomplete to $K_2$, and each vertex in $K_2$ is neither complete nor anticomplete to $K_1$. Those reduction techniques are designed to preserve graph invariants, such as chromatic number \cite{kph, kr} and stability number \cite{OPS08bis}, or graph properties, such as the property of a graph of being quasi-line \cite{CO}, fuzzy circular interval \cite{OPS08}, or even facets of the stable set polytope \cite{eo}. The state of the art complexity for recognizing whether a graph $G(V,E)$ has some proper and homogeneous pairs of cliques is $O(|V(G)|^2|E(G)|)$ \cite{kr, Pi}.

\smallskip
In this paper, we introduce a reduction operation that generalizes and unifies those different techniques. It essentially replaces a proper and homogeneous pair of cliques $\{K_1, K_2\}$ with another pair of cliques $\{A_1, A_2\}$ that is homogeneous but non-proper. A large number of pairs $\{A_1, A_2\}$ can be used in our reduction, and the choice of a particular pair is done depending on some invariant (or property) we want the reduction to preserve. Regardless of this choice and of the number of proper and homogeneous clique of the input graph $G$, we show that our reduction can be embedded in a fast algorithm that iteratively replaces a proper and homogeneous pair of cliques $\{K^i_1, K^i_2\}$ with a non-proper and homogeneous one $\{A^i_1, A^i_2\}$, and outputs after $|E(G)|$ iterations a graph without proper and homogeneous pairs of cliques. We stress that the algorithm is not graph-class specific, i.e. it works with \emph{any} simple graph in input. Our main result will be then the following:

\begin{theorem}\label{thr:phelimin_preliminary}
Let $G(V,E)$ be a graph. Algorithm \ref{alg:phelimin} builds a sequence of graphs $G = G^0, G^1, \ldots, G^q$, with $q \leq |E(G)|$, such that $G^q$ has no proper and homogeneous pairs of cliques, and each $G^i$, $i < q$, is obtained from $G^{i-1}$ by replacing a proper and homogeneous pair of cliques $\{K^i_1, K^i_2\}$ with an homogeneous pair of cliques $\{A^i_1, A^i_2\}$. The algorithm can be implemented as to run in $O(|V(G)|^2|E(G)| + \sum_{i=1}^q p(i))$-time, if, for $i=1,\dots, q$, it takes $p(i)$-time to generate $G^{i+1}[A^i_1 \cup A^i_2]$, from the knowledge of $G^i, K_1^i$ and  $K_2^i$.
\end{theorem}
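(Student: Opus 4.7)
The plan breaks the proof into three pieces: (i) showing that each iteration is well-defined and produces the structure claimed, (ii) bounding the number $q$ of iterations by $|E(G)|$, and (iii) the running-time analysis. The first piece is the most direct: because $\{K_1^i, K_2^i\}$ is homogeneous in $G^i$, every vertex outside $K_1^i \cup K_2^i$ is completely or anticompletely adjacent to each $K_j^i$. Consequently, replacing the induced subgraph on $K_1^i \cup K_2^i$ by a new graph on $A_1^i \cup A_2^i$ and inheriting the external adjacencies vertex-by-vertex with $A_j^i$ playing the role of $K_j^i$ yields a well-defined $G^{i+1}$ in which $\{A_1^i, A_2^i\}$ is again homogeneous. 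Non-properness of $\{A_1^i, A_2^i\}$ is the very property built into the family of reductions described in the paper. Together with the observation that the loop exits precisely when no proper and homogeneous pair is left, this delivers the correctness part of the statement, modulo termination.

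The main obstacle is proving $q \leq |E(G)|$. The plan is to exhibit a nonnegative, integer-valued monovariant $\Phi$ with $\Phi(G^0) \leq |E(G)|$ that strictly decreases at every iteration. The most natural candidate is $\Phi(H) = |E(H)|$ itself: since the reduction modifies the graph only inside $K_1^i \cup K_2^i$ and the new pair $\{A_1^i, A_2^i\}$ is non-proper, I expect a short case analysis on the form of $\{A_1^i, A_2^i\}$ to show that the induced subgraph on $A_1^i \cup A_2^i$ has strictly fewer edges than the one on $K_1^i \cup K_2^i$ — intuitively, properness forced certain crossing edges to exist, and dropping properness lets at least one of them be removed. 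Should the plain edge-count fail in some boundary case, a refined potential, such as the number of ordered pairs $(u,v)$ lying in opposite sides of some proper and homogeneous pair of cliques of $G^i$, would serve as a substitute while remaining bounded above by $|E(G)|$. Verifying the strict decrease is the step I expect to require the most care.

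For the running time, detection of a proper and homogeneous pair costs $O(|V|^2|E|)$ by \cite{kr,Pi}, which would naively give $O(|V|^2|E|^2)$ across up to $|E|$ iterations. To tighten this to a single $O(|V|^2|E|)$ term in total, I would run the detection procedure once on $G^0$ and then, after each reduction, update only those portions of its data structures affected by the local modification inside $A_1^i \cup A_2^i$; because the reduction changes the graph only on that localized vertex set and the adjacencies from outside to $A_j^i$ are inherited unchanged, the cumulative cost of maintenance should be absorbed into the initial $O(|V|^2|E|)$ budget. Adding the per-iteration reduction cost $p(i)$, assumed as input, yields the claimed $O(|V|^2|E| + \sum_{i=1}^q p(i))$ total.
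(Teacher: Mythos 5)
Your overall architecture (correctness of one step, a monovariant certifying $q\le|E(G)|$, then the time bound) is reasonable, but the monovariant you propose does not work, and the object that actually drives the paper's argument is missing from your plan. The potential $\Phi(H)=|E(H)|$ fails: the $\PH$ reduction of Definition \ref{reduction} is allowed to \emph{add} crossing edges, and the paper relies on this (the reduction of \cite{OPS08bis} behind Corollary \ref{cor:weighreduction} inserts all but one of the edges between $K_1$ and $K_2$). Concretely, if $K_1=\{u_1,u_2\}$ and $K_2=\{v_1,v_2\}$ with exactly the crossing edges $u_1v_1,u_2v_2$, then $G[K_1\cup K_2]$ has $4$ edges, while a legitimate non-proper 2-clique on two cliques of size $2$ (complete bipartite minus one crossing edge is $C^{free}_4$) has $5$; the edge count goes up. Your fallback potential (ordered pairs lying on opposite sides of some $\PH$ pair) is not clearly bounded by $|E(G)|$ --- opposite-side pairs of a proper pair include non-edges --- and, more importantly, you give no argument that it strictly decreases. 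That is precisely the step that cannot be waved away: one must relate the $\PH$ pairs of $G^{i+1}$ back to those of $G^i$, and nothing in your proposal does this.

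What the paper does instead is maintain an explicit candidate set $S^i\subseteq{{V(G^i)}\choose{2}}$ with $S^0=E(G)$, updated syntactically by Definition \ref{reduction_bis}. Two facts are proved: $(a)$ $S^i\supseteq\PH(G^i)$ at every step --- this is Corollary \ref{newcorollary}$(i)$, whose proof needs the inheritance Lemma \ref{lem:C4} (every $\PH$-embedding of $G^{i+1}$ comes from one of $G^i$), which in turn needs the intersection Lemma \ref{new_lemma}; and $(b)$ $|S^{i+1}|<|S^i|$ whenever $|A^i_j|\le|K^i_j|$, because the triggering pair $\{u,v\}\subseteq K^i_1$ disappears and blocks of $|K^i_j|$ old pairs collapse to at most $|A^i_j|$ new ones. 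Since $|S|$ also drops when a candidate pair is discarded, the while loop runs at most $|E(G)|$ times, which gives both $q\le|E(G)|$ and termination: $S^q=\emptyset$ forces $\PH(G^q)=\emptyset$, hence no $\PH$ pairs by Lemma \ref{trivial}. The same set repairs your running-time argument: there is no need to amortize a global $O(|V|^2|E|)$ detection with incrementally maintained data structures, because each iteration tests a \emph{single} candidate pair with the $O(|V|^2)$ routine of Theorem \ref{thr:PHemb}, and it is exactly invariant $(a)$ that licenses restricting attention to pairs of $S^i$. Without $S^i$ and Lemma \ref{lem:C4}, both the bound on $q$ and the complexity claim remain unproven.
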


Combining this theorem with a few results from the literature, we will show some more facts, among which:
\begin{itemize}
\item we can reduce in time $O(|V(G)|^\frac{5}{2}|E(G)|)$ the coloring problem (resp. the maximum clique problem) on a graph $G(V,E)$ to the same problem on a graph $G'$ without proper and homogeneous pairs of cliques;
\item we can reduce in time $O(|V(G)|^2|E(G)|)$ the maximum weighted stable set problem on a graph $G(V,E)$ to the same problem on a graph $G'$ without proper and homogeneous pairs of cliques.
\end{itemize}

\section{Preliminaries}\label{sec:basics}
Given a simple graph $G(V,E)$, let $n=|V(G)|$ and $m=|E(G)|$. We denote by $uv$ an edge of $G$, while we denote by $\{u,v\}$ a pair of vertices $u,v\in V$. For a given $x \in V$, the \emph{neighborhood} $N(x)$ is the set of vertices $\{v\in V: xv\in E\}$. We say that $v$ is \emph{universal} to $u\in V$ if $v$ is adjacent to $u$ and to every vertex in $N(u) \setminus \{v\}$. Let $S\subset V$, then $x \notin S$ is \emph{complete} (resp. \emph{anticomplete}) to $S$ in $G$ if $S \cap N(x)=S$ (resp. $S \cap N(x)=\emptyset$).
Finally, we denote by $G[U]$ the subgraph induced on $G$ by $U\subseteq V$; a $C_4$ is an induced chordless cycle on four vertices.
\begin{definition}
\label{homo}
Let $G$ be a graph and  $\{K_1, K_2\}$ be a pair of non-empty and vertex-disjoint cliques. The pair $\{K_1, K_2\}$ is {\em homogeneous} if each vertex $z \not \in (K_1 \cup K_2)$ is either
complete or anti-complete to $K_1$ and either complete or anti-complete to $K_2$.
\end{definition}

\begin{definition}
\label{proper_vertex}
Let $K$ be a clique of a graph $G$ and let $v \notin K$.  $v$ is {\em proper} to $K$ if $v$ is neither complete nor anti-complete to $K$, and $P(K)$ is the set of vertices that are proper to $K$.
\end{definition}

\begin{definition}
\label{proper}
Let $G$ be a graph and  $\{K_1, K_2\}$ be a pair of non-empty and vertex-disjoint cliques. The pair $\{K_1, K_2\}$ is {\em proper} if each vertex $u \in K_1$ $(K_2$, respectively$)$ is proper to $K_2$ $(K_1)$. A pair of vertex-disjoint cliques that are proper and homogeneous is also called a {\em $\PH$ pair}.
\end{definition}

We skip the simple proof of the following lemma.

\begin{lemma}
\label{prop_univ} Let $G$ be a graph and  $\{K_1, K_2\}$ be a homogeneous pair of cliques. Then $\{K_1,K_2\}$ is proper if an only if, for each $i\in \{1, 2\}$ and $x\in K_i$, there exist $y_1, y_2\in K_i$ (possibly $y_1 = y_2$) such that $x$ is non-universal to $y_1$ and $y_2$ is non-universal to $x$.
\end{lemma}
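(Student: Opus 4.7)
The plan is to prove each direction of the biconditional by combining the definition of properness, which supplies mixed-adjacency witnesses, with the homogeneity hypothesis, which restricts where such witnesses can live. By symmetry it suffices to treat the case $i=1$.

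For the forward direction, I would fix $x\in K_1$ and exploit that $x$ is proper to $K_2$ to pick some $a\in K_2\cap N(x)$ and some $b\in K_2\setminus N(x)$. Because the whole pair is proper, $b$ is itself proper to $K_1$, so $b$ has a neighbour $y_1\in K_1$, automatically distinct from $x$ since $b\notin N(x)$; then $b\in N(y_1)\setminus\{x\}$ together with $b\notin N(x)$ exhibits that $x$ is non-universal to $y_1$. Symmetrically, $a$ being proper to $K_1$ delivers a non-neighbour $y_2\in K_1$ of $a$, distinct from $x$, and $a$ witnesses that $y_2$ is non-universal to $x$.

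For the reverse direction I would again fix $x\in K_1$ and feed it to the hypothesis to obtain $y_1,y_2\in K_1$ (tacitly with $y_i\neq x$, which is the only substantive content of the condition, since otherwise $x$ would be non-universal to itself for free). From $x$ non-universal to $y_1$ I would extract a vertex $z\in N(y_1)\setminus\{x\}$ with $z\notin N(x)$, and the heart of the argument is then a short case analysis forcing $z\in K_2$: the vertex $z$ cannot belong to $K_1$ since $x$ and $y_1$ lie in the same clique, and it cannot lie outside $K_1\cup K_2$ either, because homogeneity would make $z$ complete or anticomplete to $K_1$, which is incompatible with having $z$ adjacent to $y_1$ and non-adjacent to $x$. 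Thus $x$ has a non-neighbour in $K_2$, and a mirror analysis of $y_2$ non-universal to $x$ produces a neighbour of $x$ in $K_2$, so $x$ is proper to $K_2$.

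The main obstacle I expect is exactly this case analysis in the reverse direction, where the homogeneity hypothesis is what traps the witness inside $K_2$; the forward direction is essentially bookkeeping with definitions. A secondary point worth flagging in the write-up is the implicit $y_i\neq x$ assumption, without which the statement would trivialize under the paper's definition of \emph{universal}.
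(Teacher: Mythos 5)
Your proof is correct; the paper omits this proof entirely (``we skip the simple proof''), and your argument is the natural one the authors presumably intended, including the careful observation that the statement must implicitly require $y_1,y_2\neq x$, since otherwise $x$ is vacuously non-universal to itself and the reverse direction would fail. One small wording slip to fix in the write-up: in the reverse direction, $z\notin K_1$ because $z$ and $x$ (not $x$ and $y_1$) would then be distinct vertices of the same clique and hence adjacent, contradicting $z\notin N(x)$.
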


In fact, one can show that for each clique $K_i$ of a proper pair  $\{K_1,K_2\}$ there always exist two vertices $x, y \in K_i$ that are non-universal to each other. Namely, we have the following (see Lemma $1$ in \cite{eo}):

\begin{lemma}
\label{lem:inducedC4} Let $\{K_1,K_2\}$ be a proper pair of cliques in a graph $G$. Then $G[K_1\cup K_2]$ contains $C_4$ as an induced subgraph.
\end{lemma}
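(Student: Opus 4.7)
The plan is to recast the question in bipartite-graph terms. First I observe that any induced $C_4$ inside $G[K_1 \cup K_2]$ must use exactly two vertices from $K_1$ and two from $K_2$: it cannot use $3$ or $4$ vertices from a single clique $K_i$ (a clique contains no induced $C_4$, nor a triangle-plus-pendant shape with $4$ edges and no triangle), and by symmetry the only remaining distribution is $(2,2)$. So I am looking for $u,u'\in K_1$ and $v,v'\in K_2$ with $uv,u'v'\in E(G)$ and $uv',u'v\notin E(G)$; equivalently, an induced $2K_2$ in the bipartite graph $B$ between $K_1$ and $K_2$ whose edges are precisely the cross-edges of $G$ between the two cliques.

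Next I would establish a standard structural fact: if $B$ contains no induced $2K_2$, then for any two vertices $x_1,x_2\in K_1$ the sets $N_B(x_1)$ and $N_B(x_2)$ are comparable by inclusion. The contrapositive is one line: if $y_1\in N_B(x_1)\setminus N_B(x_2)$ and $y_2\in N_B(x_2)\setminus N_B(x_1)$, then $\{x_1,y_1,x_2,y_2\}$ induces a $2K_2$ in $B$, hence a $C_4$ in $G[K_1\cup K_2]$. Thus, assuming for contradiction that $G[K_1\cup K_2]$ contains no induced $C_4$, the neighborhoods $\{N_B(x) : x\in K_1\}$ form a chain under $\subseteq$.

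Finally, pick $u^*\in K_1$ whose neighborhood $N_B(u^*)$ is maximal (and hence contains $N_B(u)$ for every $u\in K_1$, by the chain property). Since $\{K_1,K_2\}$ is proper, $u^*$ is proper to $K_2$, so there exists $v^*\in K_2$ with $v^*\notin N_B(u^*)$. But then $v^*\notin N_B(u)$ for every $u\in K_1$, meaning $v^*$ is anticomplete to $K_1$, which contradicts properness of the pair applied to $v^*\in K_2$. Hence an induced $C_4$ must exist.

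The only real obstacle is the bipartite reformulation in the first step; once the problem is phrased as ''find an induced $2K_2$ in $B$,'' the chain characterization and the extremal argument on a maximum-degree vertex of $K_1$ deliver the contradiction immediately. No other case analysis seems necessary, and the proof uses properness in a perfectly symmetric way on both sides.
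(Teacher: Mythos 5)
Your proof is correct and complete. Note that the paper itself does not prove this lemma at all --- it simply cites Lemma~1 of the reference \cite{eo} --- so there is no internal proof to compare against; your argument (reduce to finding an induced $2K_2$ in the bipartite cross-edge graph $B$, observe that $2K_2$-freeness forces the $B$-neighborhoods of $K_1$ to form a chain, and then derive a contradiction from a vertex with maximum neighborhood via properness on both sides) is a valid, self-contained replacement, and it correctly uses only properness, not homogeneity.
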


Hence, when looking for a $\PH$ pair in a graph, one can start from a pair of vertices that are adjacent and not universal to each other, and then determine whether they have a {\em PH-embedding}, namely:

\begin{definition}
\label{embedding}
Let $u$ and $v$ be two adjacent vertices of a graph $G$. We say that $u$ and $v$ have a \emph{$\PH$-embedding} if they are not universal to each other, and there exists a $\PH$ pair of cliques $\{K_1,K_2\}$ such that $u,v\in K_1$. We also denote by $\PH(G)$ the set of pairs of vertices of $G$ that have a $\PH$-embedding.
\end{definition}

The next lemma is therefore trivial.

\begin{lemma}
\label{trivial}
If no pair of vertices of $G$ have a $\PH$-embedding, then $G$ has no $\PH$ pairs of cliques.
\end{lemma}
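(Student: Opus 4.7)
I would prove this by contraposition: assuming $G$ contains a $\PH$ pair of cliques, exhibit a pair of vertices with a $\PH$-embedding.

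Suppose $\{K_1, K_2\}$ is a $\PH$ pair of cliques in $G$. By Lemma \ref{lem:inducedC4}, the induced subgraph $G[K_1 \cup K_2]$ contains a $C_4$, say on vertices $a,b,c,d$ with edges $ab, bc, cd, da$ and non-edges $ac, bd$. Because $K_1$ and $K_2$ are cliques while $C_4$ is triangle-free, each of $K_1$ and $K_2$ can contain at most two of these four vertices; since the four must be partitioned between $K_1$ and $K_2$, each clique contains exactly two. Moreover, the two vertices of the $C_4$ inside $K_1$ must be adjacent in the $C_4$ (two non-adjacent vertices of the $C_4$ would form a non-edge inside a clique). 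So, up to relabeling, $u := a$ and $v := b$ both lie in $K_1$, while $c, d \in K_2$.

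The pair $\{u,v\}$ is then the candidate. Clearly $u$ and $v$ are adjacent, as they lie in the clique $K_1$. They are not universal to each other: the vertex $c$ lies in $N(v)\setminus\{u\}$ but is not adjacent to $u$ (since $ac$ is a non-edge of the induced $C_4$), so $v$ is non-universal to $u$; symmetrically $d$ witnesses that $u$ is non-universal to $v$. Since $u, v \in K_1$ and $\{K_1, K_2\}$ is a $\PH$ pair, $\{u,v\}$ has a $\PH$-embedding by Definition \ref{embedding}, contradicting the hypothesis.

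The statement is genuinely routine once Lemma \ref{lem:inducedC4} is in hand; the only thing to be careful about is the combinatorial observation that the $C_4$ splits $2$-$2$ between $K_1$ and $K_2$, with the two vertices of each clique occupying adjacent positions on the cycle. Alternatively, one could bypass the $C_4$ argument and invoke Lemma \ref{prop_univ} directly: properness gives, for any fixed $x \in K_1$, a vertex $y_1 \in K_1$ to which $x$ is non-universal, so $\{x,y_1\}$ is an adjacent, non-universal pair inside $K_1$ and hence has a $\PH$-embedding. I would prefer the $C_4$ route since it is more concrete and the displayed witnesses are explicit.
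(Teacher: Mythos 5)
Your proof is correct and follows exactly the route the paper has in mind: the paper omits the proof as ``trivial'' because the preceding discussion (Lemma \ref{lem:inducedC4} plus the remark that one can start from two adjacent vertices of a clique of a proper pair that are non-universal to each other) is precisely your $C_4$ argument. One harmless slip: with Definition \ref{proper_vertex}'s convention, your witness $c\in N(v)\setminus\{u\}$ with $uc\notin E$ shows that $u$ is non-universal to $v$ (not the other way around), but since $d$ symmetrically gives the other direction, both non-universality conditions are established and the conclusion stands.
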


Given two adjacent vertices that are non-universal to each other, a simple algorithm recognizes in $O(n^2)$-time whether they have a $\PH$-embedding. This routine, which we report below, was independently proposed by King and Reed \cite{kr} and Pietropaoli \cite{Pi} (see also \cite{OPS08}). Actually King and Reed designed an algorithm for a slightly different problem: call $\{K_1,K_2\}$ a \emph{non-trivial homogeneous} ($\NTH$) pair of cliques in $G$ if $\{K_1,K_2\}$ is a homogeneous pair of cliques in $G$, and $G[K_1\cup K_2]$ has an induced $C_4$. Lemma \ref{lem:inducedC4} implies that each $\PH$ pair of cliques is a NTH pair of cliques, and one can immediately check that the converse does not always hold. But given a $\NTH$ pair of cliques $\{K_1,K_2\}$, one can obtain a $\PH$ pair of cliques $H_1,H_2$ with $H_1 \subseteq K_1$, $H_2 \subseteq K_2$, by iteratively removing from $\{K_1,K_2\}$ vertices that are non-proper to the opposite clique. Thus, in order to find a $\NTH$ pair one can look for a $\PH$ pair: this is exactly what King and Reed do in \cite{kr} (see Section $3$).

\begin{algorithm}
\caption{Finding a $\PH$-embedding} \label{PH_emb}
\begin{algorithmic} [1]
\REQUIRE A graph $G$, and a pair of adjacent vertices $\{u,v\}$ that are not universal to each other.
\ENSURE A $\PH$-embedding $\{K', K\}$ for $\{u, v\}$, if any.

\smallskip
\STATE  $K' := \{u, v\}$; $K := P(\{u,v\})$; \WHILE {$K$ is a
clique and $P(K) \neq K'$} \STATE{$K':= K$, $K:= P(K)$;} \ENDWHILE \STATE {\bf if} {$K$ is not a clique} {\bf then}
{there is no $\PH$-embedding for $\{u, v\}$: {\bf stop}.}
\STATE {\bf else} {$P(K) =K'$} and {$\{K,K'\}$ is a $\PH$-embedding for $\{u, v\}$: {\bf stop}.}
\end{algorithmic}
\end{algorithm}

\begin{theorem}\label{thr:PHemb}\cite{kr}, \cite{Pi} It is possible to implement Algorithm \ref{PH_emb} as to run in $O(|V(G)|^2)$.
\end{theorem}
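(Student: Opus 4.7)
My plan is to analyze the algorithm via the sequence $L_0 = \{u,v\}$ and $L_{i+1} = P(L_i)$: after initialization and $j$ iterations of the while loop, the variables satisfy $(K',K) = (L_j, L_{j+1})$. The loop exits either when some $L_i$ fails to be a clique (and the algorithm correctly reports no $\PH$-embedding), or when $P(L_{j+1}) = L_j$. In the latter case I would argue that $\{L_j, L_{j+1}\}$ is a $\PH$ pair: it is proper because each of $L_j, L_{j+1}$ equals the $P$-set of the other, and homogeneous because a vertex outside $L_j \cup L_{j+1}$ that was not complete or anticomplete to $L_j$ would lie in $P(L_j) = L_{j+1}$, contradicting that it lies outside; the analogous fact holds for $L_{j+1}$.

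The crucial structural ingredient is the monotonicity property $L_i \subseteq L_{i+2}$, proved by induction on $i$. The base case $L_0 \subseteq L_2$ uses non-universality of $\{u,v\}$: there exist $w_1 \in N(u) \setminus N(v)$ and $w_2 \in N(v) \setminus N(u)$, both of which land in $L_1 = P(\{u,v\})$, so each of $u,v$ has both a neighbor and a non-neighbor in $L_1$ and hence lies in $L_2 = P(L_1)$. For the inductive step, any $w \in L_{i+1}$ has a neighbor $a$ and a non-neighbor $b$ in $L_i$; by induction $a,b \in L_{i+2}$, while $w \notin L_{i+2}$ because $P(L_{i+1})$ is by definition disjoint from $L_{i+1}$; hence $w \in L_{i+3}$. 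Since the two same-parity subsequences form non-decreasing chains in $V$, they stabilize after at most $n$ steps, which is precisely the exit condition $L_{j+2} = L_j$; moreover $\{u,v\} = L_0 \subseteq L_{2k}$ for every $k$, so the output pair contains $\{u,v\}$ as required by Definition~\ref{embedding}. A symmetric induction, applied to an assumed $\PH$-embedding $\{K_1^*, K_2^*\}$ of $\{u,v\}$ with $u,v \in K_1^*$, shows $L_{2k} \subseteq K_1^*$ and $L_{2k+1} \subseteq K_2^*$, so every $L_i$ remains a clique and the algorithm never aborts incorrectly.

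The main obstacle is the $O(n^2)$ running-time bound, since the naive per-iteration cost of computing $P(L_i)$ is $\Theta(n |L_i|)$ and there may be $\Theta(n)$ iterations. I would maintain two counter arrays $c_{\text{even}}[w] = |N(w) \cap L_{2k}|$ and $c_{\text{odd}}[w] = |N(w) \cap L_{2k+1}|$, together with the cardinalities $|L_{2k}|$ and $|L_{2k+1}|$; with these, $L_{i+1} = P(L_i)$ is extracted in $O(n)$ time by a single pass looking for $0 < c[w] < |L_i|$. The counters are updated incrementally whenever the relevant same-parity chain enlarges, by scanning only the adjacency lists of the newly-entering vertices; by the monotonicity each vertex enters each chain at most once, so the total counter-update work telescopes to $O(\sum_v \deg(v)) = O(m) = O(n^2)$. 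The clique check for each new entrant reduces to comparing its counter against the previous same-parity cardinality, plus $O(|S_k|^2)$ work to verify adjacencies within each new batch $S_k$; this aggregates to $O(n^2)$ because $\sum_k |S_k| \leq n$. Combining $O(n)$ iterations at $O(n)$ extraction cost with the amortized incremental maintenance yields the promised $O(n^2)$ bound.
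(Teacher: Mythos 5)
Your argument is correct and is essentially the standard proof from the cited sources \cite{kr,Pi}: the paper itself states Theorem~\ref{thr:PHemb} without proof, and your combination of (a) the monotonicity $L_i\subseteq L_{i+2}$ of the iterated proper-sets, (b) the containment $L_{2k}\subseteq K_1^*$, $L_{2k+1}\subseteq K_2^*$ for any true $\PH$-embedding, and (c) amortized maintenance of neighbor-counters so that each vertex's adjacency list is scanned $O(1)$ times, is exactly the intended route to both correctness and the $O(|V(G)|^2)$ bound. No gaps.
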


Besides considering pairs of cliques that are proper and homogeneous, we will also consider pairs of cliques that are homogeneous but non-proper. This leads to the following definition:

\begin{definition}
\label{c_f-free}
Let $G$ be a graph and $\{A_1, A_2\}$ be a pair of non-empty and vertex-disjoint cliques that are not complete to each other. The pair $\{A_1, A_2\}$ is {\em $C^{free}_4$} if $G[A_1\cup A_2]$ has no induced $C_4$. A pair of cliques that is $C^{free}_4$ and homogeneous is also called a {\em $\CH$ pair}.
\end{definition}

It follows from Lemma \ref{lem:inducedC4} that no pair of $C^{free}_4$ cliques is proper. We skip the simple proof of the next lemma.
\begin{lemma}
\label{universal}
Let $G$ be a graph and $\{A_1, A_2\}$ be a pair of non-empty and vertex-disjoint cliques that are not complete to each other. Then $\{A_1, A_2\}$ is {\em $C^{free}_4$} if and only if the following holds: if $u$ and $v\in A_1$ then $u$ is universal to $v$ or $v$ is universal to $u$ (note that this property holds if and only if the same happens with the vertices of $A_2$).
 \end{lemma}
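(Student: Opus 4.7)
The plan is to prove both directions of the equivalence by contrapositive, with the crux being a structural description of any induced $C_4$ in $G[A_1\cup A_2]$. First I would observe that, since $A_1$ and $A_2$ are each cliques, any induced $C_4$ in $G[A_1\cup A_2]$ must contain exactly two vertices from each of $A_1$ and $A_2$: three vertices on one side would form a triangle, which is impossible in an induced $C_4$. Moreover the two $A_1$-vertices, being adjacent, must occupy consecutive positions on the 4-cycle (otherwise their edge would be a chord), and similarly for the two $A_2$-vertices. Hence any induced $C_4$ in $G[A_1\cup A_2]$ has the form $u$--$v$--$w$--$x$--$u$ with $u,v\in A_1$, $w,x\in A_2$, the edges $uv, vw, wx, xu$ present, and the non-edges $uw, vx$.

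For the implication ``$\{A_1,A_2\}$ is $C_4^{free}$ $\Rightarrow$ the universality property on $A_1$'', I argue by contrapositive. Suppose there are $u,v\in A_1$ such that neither is universal to the other (within $G[A_1\cup A_2]$). Then there exists $w\in N(v)\setminus\{u\}$ with $w\notin N(u)$; since $A_1$ is a clique, every vertex of $A_1\setminus\{v\}$ is adjacent to $u$, so necessarily $w\in A_2$. Symmetrically there exists $x\in N(u)\cap A_2$ with $x\notin N(v)$, and $w\neq x$ because $w\notin N(u)$ while $x\in N(u)$. The induced subgraph on $\{u,v,w,x\}$ contains the edges $uv$ and $wx$ (forced by the cliques), together with $vw$ and $ux$, and has non-edges $uw$ and $vx$, hence is an induced $C_4$ in $G[A_1\cup A_2]$.

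For the converse, suppose $G[A_1\cup A_2]$ contains an induced $C_4$. By the structural observation it takes the form $u$--$v$--$w$--$x$--$u$ with $u,v\in A_1$, $w,x\in A_2$. Then $w$ is a neighbor of $v$ in $A_2$ that is not a neighbor of $u$, witnessing that $u$ is not universal to $v$; and $x$ is a neighbor of $u$ in $A_2$ that is not a neighbor of $v$, witnessing that $v$ is not universal to $u$. So the universality condition fails on the pair $\{u,v\}\subseteq A_1$. The parenthetical remark follows by symmetry: the same argument with the roles of $A_1$ and $A_2$ interchanged shows that the universality condition restricted to $A_2$ is also equivalent to $G[A_1\cup A_2]$ being induced-$C_4$-free, hence the two conditions are equivalent to each other.

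The argument is almost entirely mechanical; the only step that deserves care is the structural claim about induced $C_4$'s, specifically that the two $A_i$-vertices must lie consecutively on the cycle, since this is what forces the precise pattern of cross-edges used in both directions of the proof. Beyond this observation there is no genuine obstacle.
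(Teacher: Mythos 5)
The paper offers no proof of this lemma --- it explicitly says ``We skip the simple proof'' --- so there is nothing to compare against; your argument is correct and is surely the intended one. Both directions are handled cleanly: the structural observation that an induced $C_4$ in $G[A_1\cup A_2]$ must take exactly two consecutive vertices from each clique is exactly what makes the equivalence work, and the contrapositive construction of the $C_4$ from a non-universal pair is sound.

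One caveat worth flagging, which you half-noticed with your parenthetical ``(within $G[A_1\cup A_2]$)'': the paper's definition of ``$v$ is universal to $u$'' quantifies over \emph{all} of $N(u)$ in $G$, not just over $N(u)\cap(A_1\cup A_2)$. Under that global reading, your step ``$w\neq u$ and $w\notin N(u)$, so necessarily $w\in A_2$'' fails, since the witness $w$ could lie outside $A_1\cup A_2$; indeed the forward implication of the lemma is then false in general (take $A_1=\{u,v\}$, $A_2=\{w\}$ with $uw\in E$, $vw\notin E$, and an external vertex adjacent to $v$ but not $u$). The statement is rescued either by your local interpretation or by assuming the pair is homogeneous, in which case external vertices cannot distinguish $u$ from $v$ and the two notions of universality coincide; every application of the lemma in the paper is to a homogeneous ($\CH$) pair, so your proof covers all the uses that matter.
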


The next lemma analyzes the possible intersections between $\PH$ and $\CH$ pairs of cliques.

\begin{lemma}
\label{new_lemma}
Let $G(V,E)$ be a graph with a $\PH$ pair of cliques $\{K_1, K_2\}$ and a $\CH$ pair of cliques $\{A_1, A_2\}$.   Then $K_1 \cap A_2= K_2 \cap A_1= \emptyset$ or $K_1 \cap A_1= K_2 \cap A_2= \emptyset$.
\end{lemma}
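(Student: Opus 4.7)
The plan is to proceed by contradiction. Assume that neither $K_1\cap A_2=K_2\cap A_1=\emptyset$ nor $K_1\cap A_1=K_2\cap A_2=\emptyset$ holds. Then we may pick a witness in $(K_1\cap A_2)\cup(K_2\cap A_1)$ and a witness in $(K_1\cap A_1)\cup(K_2\cap A_2)$. Combining the four possibilities and using the symmetries $K_1\leftrightarrow K_2$ and $A_1\leftrightarrow A_2$, we reduce to one of two configurations: (A) some $K_i$ intersects both $A_1$ and $A_2$; or (B) some $A_j$ intersects both $K_1$ and $K_2$.

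Case (B) is the easy one. Say $x\in K_1\cap A_j$ and $y\in K_2\cap A_j$. Since $x,y\in A_j$, Lemma \ref{universal} gives, WLOG, that $y$ is universal to $x$ in $G$. Applying the properness of $\{K_1,K_2\}$ to $y\in K_2$ yields some $v\in K_1$ with $yv\notin E$. Now $xy\in E$ forces $v\neq x$, so the clique $K_1$ gives $vx\in E$, hence $v\in N(x)\setminus\{y\}$; universality then forces $yv\in E$, a contradiction.

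Case (A) is where most of the work lies, and I may additionally assume $K_2\cap(A_1\cup A_2)=\emptyset$ (otherwise we are back in case (B)). Fix $x\in K_1\cap A_1$ and $y\in K_1\cap A_2$. Using that $\{A_1,A_2\}$ is not complete (Definition \ref{c_f-free}), pick a non-edge $pq$ with $p\in A_1$ and $q\in A_2$. The main intermediate step is to show that $p,q\notin K_1\cup K_2$: membership in $K_2$ is excluded by the standing assumption, and membership in $K_1$ is excluded by a short case analysis. Concretely, if, say, $p\in K_1$, then $q\notin K_1\cup K_2$ (else $pq\in E$); homogeneity of $\{K_1,K_2\}$ applied to $q$, combined with the clique-edge $qy\in E$ in $A_2$, forces $q$ to be complete to $K_1$, giving $pq\in E$ --- contradiction. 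The symmetric argument rules out $q\in K_1$.

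Once $p\notin K_1\cup K_2$, homogeneity of $\{K_1,K_2\}$ requires $p$ to be complete or anticomplete to $K_2$. The finishing move is to exhibit both a neighbor and a non-neighbor of $p$ in $K_2$: applying properness of $\{K_1,K_2\}$ to $x$ gives $z^+,z^-\in K_2$ with $z^+x\in E$ and $z^-x\notin E$, both outside $A_1\cup A_2$ by the standing assumption. Homogeneity of $\{A_1,A_2\}$ then forces $z^+$ to be complete to $A_1$ and $z^-$ to be anticomplete to $A_1$, giving $z^+p\in E$ and $z^-p\notin E$; this violates the dichotomy for $p$ and completes the proof. The main obstacle is the first step of case (A), where one must squeeze a contradiction out of the single non-edge $pq$ by carefully cross-applying the two homogeneity hypotheses.
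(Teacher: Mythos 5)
Your proof is correct. It follows the same overall strategy as the paper's --- identify the two bad intersection patterns and kill each with a combination of homogeneity of $\{K_1,K_2\}$, properness, and the universal-vertex structure of the $C^{free}_4$ pair --- but your case decomposition is organized differently, and this buys a genuine simplification. The paper first proves, as an \emph{unconditional} claim, that no $K_i$ can meet both $A_1$ and $A_2$; since it cannot assume $K_2\cap(A_1\cup A_2)=\emptyset$ at that point, it must derive that fact, which costs an extra argument (showing that the two non-neighbours $c,d\in K_2$ of $b$ and $a$ lie outside $A_1\cup A_2$ by exhibiting a forbidden $C_4$ in $G[A_1\cup A_2]$, and then propagating anticompleteness across the clique $K_2$). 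You instead dispose of configuration (B) first, so that in configuration (A) you may assume outright that $K_2$ misses $A_1\cup A_2$, and the whole case collapses to the non-edge $pq$ plus the two properness witnesses $z^+,z^-$ for $x$; the finishing move (a vertex of $A_1$ outside $K_1\cup K_2$ that is proper to $K_2$) is the same as the paper's. Your treatment of configuration (B) is essentially the paper's closing paragraph, there phrased as ``$v_1,v_2\in A_2$ are not universal to each other.'' The only cosmetic slip is the parenthetical ``(else $pq\in E$)'' when excluding $q\in K_1\cup K_2$: it justifies only $q\notin K_1$, while $q\notin K_2$ comes from the standing assumption --- which you had already invoked, so nothing is lost.
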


\begin{proof}
We start with the following:
\begin{myclaim}\label{cl:1} $K_i\cap A_1 = \emptyset$ or $K_i\cap A_2 = \emptyset$, for $i=1,2$.
\end{myclaim}

\begin{proofclaim}
Without loss of generality, suppose to the contrary that there exist $a \in A_1$ and $b \in A_2$ such that $a,b \in K_1$. Being $K_1$ proper to $K_2$, there exist $c,d \in K_2$ (possibly non-distinct) such that $ad, bc \notin E$. We first show that $c, d \notin A_1\cup A_2$. Note that $d \notin A_1$ and $c \notin A_2$. Now suppose that $d\in A_2$; it follows that $d\neq c$. Since $c$ is adjacent to $d$ and not adjacent to $b$, and $\{A_1,A_2\}$ is a homogenous pair, it follows that $c\in A_1$. But then $a, b, c, d$ induce a $C_4$ on $G[A_1\cup A_2]$, and therefore neither $a$ is universal to $c$ nor $c$ is universal to $a$, which is a contradiction to Lemma \ref{universal}. We get an analogous contradiction if we assume that $c\in A_1$.

So $c,d \notin A_1\cup A_2$; being $ad, bc \notin E$ and $\{A_1,A_2\}$ a homogeneous pair, $c$ is anti-complete to $A_2$ and $d$ is anti-complete to $A_1$. Since $K_2$ is a clique, it follows that $K_2\cap (A_1\cup A_2)=\emptyset$. Since $A_1\cup A_2$ is not a clique, there exist $a' \in A_1$, $b' \in A_2$ such that $a'b' \notin E$. Note that $da'\notin E$ and that $a'\notin K_2$. We now show that $a'\notin K_1$. For, suppose the contrary; then $b'\neq b$ and $b'\notin K_1$, and so $b'$ is proper to $K_1$ and therefore belongs to $K_2$, which is a contradiction, since we already argued that $K_2\cap (A_1\cup A_2)=\emptyset$.

Hence $a'\notin K_1\cup K_2$. Since $\{K_1, K_2\}$ is a proper pair, there exists a vertex
$e \in K_2$ such that $ea \in E$. Since $K_2\cap (A_1\cup A_2)=\emptyset$ and $\{A_1,A_2\}$ is a homogeneous pair, it follows that $ea' \in E$. On the other hand, we observed that $da'\notin E$. But then $a'$ is proper to $K_2$, contradicting $a' \notin K_1$.
\end{proofclaim}

\smallskip
From the claim, we may assume without loss of generality that $K_1 \cap A_1= \emptyset$. In this case, the statement follows if $K_2 \cap A_2= \emptyset$, so suppose that there exists $v_2\in K_2 \cap A_2$. It again follows from the previous claim that $K_2 \cap A_1 \neq \emptyset$; hence the statement follows if $K_1 \cap A_2= \emptyset$. So suppose that there exists $v_1\in K_1 \cap A_2$; since $\{K_1, K_2\}$ is a proper pair, it follows that $v_1, v_2\in A_2$ are not universal to each other, a contradiction to Lemma \ref{universal}.
\end{proof}

\section{An algorithm for removing proper and homogeneous pairs}\label{sec:reduction}

We now define an operation of {\em reduction} that is crucial for the paper. This operation essentially replaces a $\PH$ pair of cliques with a $\CH$ pair of cliques. The latter pair will be defined through a suitable graph that we call, for shortness, a {\em non-proper 2-clique}.

\begin{definition}
\label{gadget}
A {\em non-proper 2-clique} $H_{\{A_1,A_2\}}$ is a graph with a $C^{free}_4$ pair of cliques $\{A_1,A_2\}$, such that $V(H_{\{A_1,A_2\}}) = A_1\cup A_2$.
\end{definition}

\begin{definition}
\label{reduction}
Let $G$ be a graph with a $\PH$ pair of cliques $\{K_1,K_2\}$. Also let $H_{\{A_1,A_2\}}$ be a non-proper 2-clique graph vertex-disjoint from $G$. The {\em $\PH$ reduction} of $G$ with respect to $(K_1, K_2, H_{\{A_1,A_2\}})$ returns a new graph $G|_{K_1, K_2, H_{\{A_1,A_2\}}}$ defined as follows:

\begin{itemize}
\item $V(G|_{K_1, K_2, H_{\{A_1,A_2\}}})=(V(G)\setminus (K_1\cup K_2)) \cup (A_1 \cup A_2)$;
\item Let $x, y$ be vertices of $G|_{K_1, K_2, H_{\{A_1,A_2\}}}$. The edge $xy \in E(G|_{K_1, K_2, H_{\{A_1,A_2\}}})$ if and only if one of the following holds:
\begin{itemize}
\item $xy \in E(G)$ with $x,y \notin K_1\cup K_2$;
\item $xy \in E(H_{\{A_1,A_2\}})$ with $x,y \in A_1\cup A_2$;
\item $y \in A_1$, $x \notin K_1 \cup K_2$ and $x$ is complete to $K_1$;
\item $y \in A_2$,  $x \notin K_1 \cup K_2$ and $x$ is complete to $K_2$.
\end{itemize}
\end{itemize}
\end{definition}

We skip the trivial proof of the following lemma.

\begin{lemma}\label{lem:chain}
The graph $G|_{K_1, K_2, H_{\{A_1,A_2\}}}$ is such that the following properties hold:
\begin{itemize}
\item $\{A_1, A_2\}$ is a $\CH$ pair of cliques;
\item if $x,y\in A_1$ (resp. $x,y\in A_2$), then $x$ is universal to $y$ or $y$ is universal to $x$;
\item if $|K_1|\geq |A_1|$ and $|K_2|\geq |A_2|$, then the graph $G|_{K_1, K_2, H_{\{A_1,A_2\}}}$ can be built in time $O(|V(G)|^2)$ and $|V(G|_{K_1, K_2, H_{\{A_1,A_2\}}})|\leq |V(G)|$.
\end{itemize}
\end{lemma}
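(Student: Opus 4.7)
The plan is to verify each of the three bullets in turn; all follow rather directly from the way the reduction is defined. Write $G'$ as shorthand for $G|_{K_1,K_2,H_{\{A_1,A_2\}}}$. The first two items are structural claims about $G'$ and the third is a complexity and size bound, and none of them needs any deep combinatorial argument.

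For the $\CH$ claim, I would first observe that the edges of $G'$ with both endpoints in $A_1\cup A_2$ are exactly the edges of $H_{\{A_1,A_2\}}$: indeed, in Definition \ref{reduction} only the second bullet contributes such edges. Hence $G'[A_1\cup A_2]$ is (isomorphic to) $H_{\{A_1,A_2\}}$, and since $\{A_1,A_2\}$ is $C^{free}_4$ in $H_{\{A_1,A_2\}}$ by the definition of a non-proper 2-clique, the same holds in $G'$. For homogeneity, pick any $z\in V(G')\setminus(A_1\cup A_2)$; then $z\in V(G)\setminus(K_1\cup K_2)$, and the third and fourth bullets of Definition \ref{reduction} say that $z$ is adjacent in $G'$ to some vertex of $A_1$ (resp.\ $A_2$) iff $z$ is complete in $G$ to $K_1$ (resp.\ $K_2$), in which case $z$ is in fact adjacent to \emph{every} vertex of $A_1$ (resp.\ $A_2$). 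Since $\{K_1,K_2\}$ is homogeneous in $G$, each $z$ is either complete or anticomplete to each of $K_1,K_2$ in $G$, and so the required dichotomy transfers to $\{A_1,A_2\}$ in $G'$. The second bullet of the lemma is then an immediate consequence of the first via Lemma \ref{universal} applied to the $C^{free}_4$ pair $\{A_1,A_2\}$ in $G'$.

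For the third item, the vertex count is $|V(G')|=|V(G)|-(|K_1|+|K_2|)+(|A_1|+|A_2|)$, and the size hypotheses $|K_i|\geq |A_i|$ give $|V(G')|\leq|V(G)|$ immediately. For the running time, the key is to avoid spending $\Theta(|K_i|)$ per edge decision. I would first precompute, for every $z\in V(G)\setminus(K_1\cup K_2)$, two Boolean flags recording whether $z$ is complete to $K_1$ and whether $z$ is complete to $K_2$ in $G$; this preprocessing costs $O(|V(G)|^2)$ in total, since each vertex's adjacency is scanned once. Afterwards, each of the at most $|V(G)|^2$ potential edges of $G'$ can be decided in $O(1)$ by selecting the appropriate case of Definition \ref{reduction} (inherit from $G$, inherit from $H_{\{A_1,A_2\}}$, or consult a precomputed flag).

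I do not expect a genuine obstacle: the authors themselves mark the proof as trivial, and the statement is essentially a bookkeeping verification. The one place where one has to be careful is the distinction between edges \emph{induced} on $A_1\cup A_2$ (which govern the $C^{free}_4$ property and are dictated entirely by $H_{\{A_1,A_2\}}$) and edges \emph{between} $A_1\cup A_2$ and the rest of $V(G')$ (which govern homogeneity and are dictated by the completeness pattern in $G$); keeping these two sources of edges logically separate is what makes the three claims fall out cleanly.
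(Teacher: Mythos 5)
Your proof is correct, and since the paper explicitly omits this proof as trivial, your routine verification (edges inside $A_1\cup A_2$ come only from $H_{\{A_1,A_2\}}$, edges to the outside are governed by completeness to $K_1$ or $K_2$, plus the counting and $O(|V(G)|^2)$ preprocessing argument) is exactly the intended argument. The only minor point worth noting is that your second bullet implicitly relies on homogeneity of $\{A_1,A_2\}$ as well as $C^{free}_4$-ness when invoking Lemma \ref{universal}, since universality is a global notion; this is consistent with how the paper itself uses that lemma.
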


The following crucial lemma shows that all the $\PH$ pairs of $G|_{K_1, K_2, H_{\{A_1,A_2\}}}$  are ``inherited" by the input graph $G$.

\begin{lemma}\label{lem:C4}
Let $\{w_1,w_2\}$ be a pair of adjacent vertices of $G|_{K_1, K_2, H_{\{A_1,A_2\}}}$ with a $\PH$-embedding. Then:
\begin{enumerate}
\item $w_1$ and $w_2$ do not both belong to $A_1\cup A_2$;
\item if $w_1,w_2 \notin A_1\cup A_2$, then $\{w_1,w_2\}$ also admits a $\PH$-embedding in $G$;
\item if $w_1 \in A_1$ (resp. $w_1\in A_2$) and $w_2\notin A_1\cup A_2$, then, for each $a \in K_1$ (resp. $a\in K_2$), $\{a, w_2\}$ admits a $\PH$-embedding in $G$.
\end{enumerate}
\end{lemma}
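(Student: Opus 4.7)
The plan is to exploit two facts that are already on the table: by Lemma \ref{lem:chain} the pair $\{A_1,A_2\}$ is a $\CH$ pair in $G'$, so Lemma \ref{new_lemma} applies to the $\PH$-embedding $\{K_1',K_2'\}$ and to $\{A_1,A_2\}$. After possibly swapping names we may therefore assume
$$K_1'\cap A_2=K_2'\cap A_1=\emptyset,$$
and the whole proof is run inside this normal form.

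For statement (1), assume for contradiction that $w_1,w_2\in A_1\cup A_2$; since both lie in $K_1'$, the normal form forces $w_1,w_2\in A_1$ (the case $A_2$ is symmetric). Lemma \ref{universal} applied to $\{A_1,A_2\}$ gives, WLOG, that $w_1$ is universal to $w_2$ in $G'[A_1\cup A_2]$. The goal is to upgrade this universality to all of $G'$: take any $z\in V(G')\setminus (A_1\cup A_2)$ adjacent to $w_2$; by Definition \ref{reduction} such a $z$ is complete to $K_1$ in $G$, hence by the same rule adjacent to $w_1\in A_1$. Thus $w_1$ is universal to $w_2$ in the whole of $G'$, which contradicts the existence of a $\PH$-embedding.

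For (2) and (3) the idea is to lift $\{K_1',K_2'\}$ back to $G$ by replacing each piece of $K_i'$ living in an $A_j$ by the whole $K_j$. Concretely define, for $i=1,2$,
$$H_i=\bigl(K_i'\setminus(A_1\cup A_2)\bigr)\cup\bigcup_{j:\,K_i'\cap A_j\neq\emptyset}K_j.$$
The key preliminary observation, used everywhere below, is that whenever $K_i'\cap A_j\neq\emptyset$, every external vertex $u\in K_i'\setminus(A_1\cup A_2)$ is adjacent in $G'$ to some vertex of $A_j$ and hence, by Definition \ref{reduction}, complete to $K_j$ in $G$; combined with the homogeneity of $\{K_1,K_2\}$ in $G$, this makes $H_i$ a clique and, later, guarantees the right lifting of adjacencies.

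What remains is to verify that $\{H_1,H_2\}$ is $\PH$ in $G$. Homogeneity: for $z\in V(G)\setminus(H_1\cup H_2)$, one checks that $z\notin K_1'\cup K_2'$ in $G'$ (for external $z$) so that its complete/anti-complete status with respect to each $K_i'$ lifts, via the edge rule and the homogeneity of $\{K_1,K_2\}$ in $G$, to the same status with respect to each $H_i$; internal $z\in K_j$ are covered by $K_j\subseteq H_i$ whenever $K_i'\cap A_j\neq\emptyset$. Properness: vertices of $K_j\subseteq H_i$ are proper to the opposite $K_{3-j}\subseteq H_{3-i}$ by the properness of $\{K_1,K_2\}$, and external vertices of $H_i$ inherit properness to $H_{3-i}$ from the properness of $\{K_1',K_2'\}$ in $G'$ by another application of the edge rule. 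Finally, for (2) we have $w_1,w_2\in K_1'\setminus(A_1\cup A_2)\subseteq H_1$, and for (3) we have $a\in K_1\subseteq H_1$ (because $w_1\in K_1'\cap A_1$) and $w_2\in K_1'\setminus A_1\subseteq H_1$; non-universality of $\{w_1,w_2\}$ in $G'$, carried through the edge rule, translates into non-universality in $G$ (in (3) using that $K_2$ is proper to $K_1$ whenever the witness in $G'$ happens to sit inside $A_2$, so some $b\in K_2$ plays the role of the witness).

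The hard part is the bookkeeping in the last two paragraphs: one really has to split further on whether each of $K_1'\cap A_1$, $K_2'\cap A_2$ is empty or not and to track, for external vertices, their complete/anti-complete status to both $K_1$ and $K_2$ simultaneously. I expect that the bulk of the written proof is exactly this case table, while the universality argument of part (1) and the clique/proper/homogeneous core of the lift are essentially one-liners once the edge rule of Definition \ref{reduction} is invoked consistently.
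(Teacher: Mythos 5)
Your skeleton is the same as the paper's: normalize via Lemma \ref{new_lemma} so that $K_1'\cap A_2=K_2'\cap A_1=\emptyset$, get part (1) from the universality chain inside $A_1$, and lift $\{K_1',K_2'\}$ to a pair $\{H_1,H_2\}$ of $G$ by substituting $K_j$ for the artificial part $K_i'\cap A_j$. Your part (1), the clique property of the $H_i$, and the transfer of non-universality witnesses are all sound. The gap sits exactly where you wave at it: the verification that $\{H_1,H_2\}$ is proper \emph{and homogeneous} in the mixed case where exactly one of $K_1'\cap A_1$, $K_2'\cap A_2$ is non-empty, say $K_1'\cap A_1=\emptyset$ and $K_2'\cap A_2\neq\emptyset$. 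Then $K_2\subseteq H_2$ while $K_1\cap(H_1\cup H_2)=\emptyset$. Your properness clause (``vertices of $K_j\subseteq H_i$ are proper to the opposite $K_{3-j}\subseteq H_{3-i}$'') silently assumes both cliques are absorbed, and your homogeneity clause (``internal $z\in K_j$ are covered by $K_j\subseteq H_i$ whenever $K_i'\cap A_j\neq\emptyset$'') says nothing about $z\in K_1$, which is external to $H_1\cup H_2$ yet proper to $K_2\subseteq H_2$. So in this case $\{H_1,H_2\}$ is not even homogeneous, and no case table rescues this particular lift.

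Moreover the mixed case genuinely occurs, so the obstruction is not cosmetic. Take $V(G)=\{k_1,k_2,l_1,l_2,u_1,u_2,v_1\}$ with $K_1=\{k_1,k_2\}$, $K_2=\{l_1,l_2\}$ inducing a $C_4$ (edges $k_1k_2,\,l_1l_2,\,k_1l_1,\,k_2l_2$), with $u_1,v_1$ complete to $K_2$ and anticomplete to $K_1$, $u_2$ anticomplete to $K_1\cup K_2$, and the only further edges $u_1u_2$ and $u_2v_1$. Then $\{K_1,K_2\}$ is a $\PH$ pair; reducing with $A_1=\{a\}$, $A_2=\{b\}$, $ab\notin E(H_{\{A_1,A_2\}})$ makes $u_1,b,v_1,u_2$ induce a $C_4$ in $G'$, so $\{\{u_1,u_2\},\{b,v_1\}\}$ is a $\PH$ pair of $G'$ with $K_1'\cap A_1=\emptyset$ and $b\in K_2'\cap A_2$. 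Here the lifted pair is $H_1=\{u_1,u_2\}$, $H_2=\{v_1,l_1,l_2\}$, and $k_1$ is proper to $H_2$, so homogeneity fails; worse, $\{u_1,u_2\}$ admits no $\PH$-embedding in $G$ at all (the only admissible first clique is $\{u_1,u_2\}$, any second clique must contain $v_1$ and some $l_j$, and then $k_j$ is proper to it), so conclusion (2) itself fails here. To be fair, the paper's own proof hides the same issue behind ``it is straightforward to see that $\{H_1,H_2\}$ is a homogeneous pair'', so you have reproduced its argument faithfully; but the step you defer to bookkeeping is precisely the one that does not go through, and filling it would require either ruling out the mixed case or strengthening the hypotheses on $H_{\{A_1,A_2\}}$, neither of which your proposal (or the paper) provides.
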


\begin{proof} Throughout the proof, when referring to vertices of $G|_{K_1, K_2, H_{\{A_1,A_2\}}}$, we call \emph{artificial} the vertices of $A_1\cup A_2$, and \emph{non-artificial} the others. Moreover, we let $G' = G|_{K_1, K_2, H_{\{A_1,A_2\}}}$ and let $\{K'_1, K'_2\}$ be a $\PH$-embedding for $\{w_1,w_2\}$ in $G'$.

It follows from Lemma \ref{lem:chain} that $\{A_1, A_2\}$ is a $\CH$ pair of cliques of $G'$. Therefore it follows from Lemma \ref{new_lemma} that $K'_1 \cap A_2= K'_2 \cap A_1= \emptyset$ or $K'_1 \cap A_1= K'_2 \cap A_2= \emptyset$. Now suppose that $w_1, w_2 \in A_1\cup A_2$, and recall that, by definition, $w_1, w_2 \in K'_1$. It follows that either $w_1, w_2 \in A_1$, or $w_1, w_2 \in A_2$. Thus, there exist two vertices of $A_1$ (resp. $A_2$) that are non-universal to each other, contradicting Lemma \ref{lem:chain}. Therefore $w_1$ and $w_2$ do not both belong to $A_1\cup A_2$, i.e. statement $1$ holds.

W.l.o.g. in the following we assume that $K_1' \cap A_2= K_2' \cap A_1= \emptyset$. Now define the sets $H_1,H_2$ of
vertices in $G$ as follows: for $i=1,2$, if $K_i'$ has no
artificial vertices, define $H_i=K_i'$; otherwise $H_i=(K_i'\cap
V(G))\cup K_i$. Note that this implies that $H_1\cap K_2 = H_2 \cap
K_1= \emptyset$ and that $H_1$ and $H_2$ are cliques.

\begin{myclaim}\label{cl:3} Let $u, v\in K_1'$ (respectively $K_2'$) be two non-artificial vertices of $G'$ such that $u$ is non-universal to $v$ in $G'$. Then $u, v\in H_1$ (respectively $H_2$) and $u$ is non-universal to $v$ in $G$.
\end{myclaim}

\begin{proofclaim} We prove the statement for $u,v \in K_1'$. Since $u,v$ are non-artificial,
$u,v \in H_1$ by definition. By hypothesis, there exists $z \in
K_2'$ s.t. $uz \notin E(G')$, $vz \in E(G')$. If $z$ is non-artificial, $z
\in H_2$ by definition, thus $u$ is non-universal to $v$ in $G$.
Suppose now $z$ is artificial, then $z\in A_2$, since $K_2' \cap A_1= \emptyset$. Then by construction $v$ is
complete and $u$ anticomplete to $K_2$ in $G$, thus $u$ is
non-universal to $v$
in $G$. \end{proofclaim}

\begin{myclaim}\label{cl:4} Let $u,v \in K_1'$ (respectively
$K_2'$), and suppose $u$ is artificial and $v$ is not. Then $\{v\}\cup K_1 \subseteq H_1$ (resp. $\{v\}\cup K_2 \subseteq H_2$).
Furthermore:
\begin{enumerate}
\item If $u$ is non-universal to $v$, then $a$ is non-universal to
$v$ for each $a \in K_1$ (respectively $K_2$).\item  If $v$ is
non-universal to $u$, then $v$ is non-universal to $a$, for each
$a \in K_1$ (resp. $K_2$).
\end{enumerate}
\end{myclaim}

\begin{proofclaim} We prove the statement for $u,v \in K_1'$. We are assuming that $K_1' \cap A_2= \emptyset$, hence $u\in A_1$. So by definition, $\{v\}\cup K_1 \subseteq H_1$. Suppose $u$ is non-universal to $v$: there exists $z \in K_2'$
s.t. $uz \notin E(G')$, $vz \in E(G')$. If $z$ is an artificial vertex,
then $z \in A_2$, which implies that $v$ is complete to
$K_2$, while each vertex $a\in K_1$ is proper to $K_2$. If $z$ is non-artificial, then by construction
$z$ is anticomplete to $K_1$ while $vz \in E(G)$. This shows 1. Now suppose that $v$ is non-universal to $u$, i.e. there exists $z
\in K_2'$ such that $uz \in E(G'), vz \notin E(G')$. If $z$ is an
artificial vertex, then $K_2 \subseteq H_2$ and $v$
is anticomplete to $K_2$; since each vertex $a\in K_1$ is proper to $K_2$, $v$ is
non-universal to $a$. If $z$ is non-artificial, then $z$ is
complete to $K_1$ in $G$, while $zv \notin E(G)$; thus, $v$ is
non-universal to $a \in K_1$.\end{proofclaim}

\begin{myclaim}\label{cl:5} $\{H_1, H_2\}$ is a $\PH$ pair of cliques in $G$.
\end{myclaim}

\begin{proofclaim}
We already observed that $H_1$ and $H_2$ are cliques, and it is straightforward to see that $\{H_1,H_2\}$ is a homogeneous pair.  So we conclude the proof by showing that $H_1$
is proper to $H_2$ (the other case following by symmetry).

We need to show that each vertex $x\in H_1$ has at least one neighbor and at least one non-neighbor in $H_2$.
Recall that $x \notin K_2$. Suppose first that $x \in K_1$; then by construction $K_1 \subseteq
H_1$ and $K_1'$ has at least one artificial vertex, say $a$. Since
$\{K_1',K_2'\}$ is a proper pair, it follows from Lemma \ref{prop_univ} that there exist a vertex $t_1\in K_1'$ to which
$a$ is non-universal, and a vertex $t_2\in K_1'$ which is non-universal to $a$. If $t_1$ or $t_2$ is artificial, then $K_2'$ intersects $A_2$ (recall that $a,t_1, t_2 \in A_1$ have the same neighborhood outside $K_2'$) and consequently, by construction, $K_2 \subseteq H_2$; then the statement follows since $\{K_1,K_2\}$  is a proper pair of cliques.
Conversely, if both $t_1$ and $t_2$ are non-artificial, then, using Claim \ref{cl:4}, we conclude
that in $G$ $x$ is non-universal to $t_1$ and that $t_2$ is non-universal to $x$, and therefore $x$ has at least one neighbor and at least one non-neighbor in $H_2$.

Suppose now $x \notin K_1$: then, $x$ is a non-artificial vertex of $K_1'$, and since
$\{K_1',K_2'\}$ is proper, it follows again from Lemma \ref{prop_univ} that there exist a vertex $t_1\in K_1'$ to which
$x$ is non-universal, and a vertex $t_2\in K_1'$ which is non-universal to $x$. If both $t_1$ and $t_2$ are non-artificial, then also in $G$ we have that $x$ is non-universal to $t_1$ and $t_2$ is non-universal to $x$. If $t_1$ or $t_2$ is artificial, then thanks to Claim \ref{cl:4}, we may suitably replace $t_1$ or $t_2$ with vertices from $K_1$ as to get the same conclusion.
\end{proofclaim}

We conclude the proof of the lemma: part $2$ holds by Claims \ref{cl:3} and \ref{cl:5}, while part $3$ holds by Claims \ref{cl:4} and \ref{cl:5}.
\end{proof}

\medskip
As we show in the following, if we iterate the reduction of Definition \ref{reduction}, we end up, in at most $|E(G)|$ steps, with a graph without $\PH$ pairs of cliques.
We first need a definition and a simple lemma, going along the same lines of Definition \ref{reduction} and Lemma \ref{lem:C4}. For a graph $G$, we denote by ${V(G)}\choose{2}$ the set of unordered pairs of vertices of $V(G)$.

\begin{definition}
\label{reduction_bis}
Let $G$ and $G' := G|_{K_1, K_2, H_{\{A_1,A_2\}}}$ be as in Definition \ref{reduction}, and let $S\subseteq {{V(G)}\choose{2}}$. The set $S|_{K_1, K_2, H_{\{A_1,A_2\}}}\subseteq {{V(G')}\choose{2}}$ is the set of pairs $\{x, y\}$ such that one of the following hold:
\begin{itemize}
\item $\{x, y\}\in S$ and $x, y \notin A_1\cup A_2$;
\item $x \in A_1$, $y\notin A_1\cup A_2$ and $\{\{a, y\} \mid a \in K_1\} \subseteq S$;
\item $y \in A_2$, $x\notin A_1\cup A_2$ such that $\{\{x, a\} \mid a \in K_2\} \subseteq S$.
\end{itemize}
\end{definition}

\begin{corollary}
\label{newcorollary}
Let $G$, $G':= G|_{K_1, K_2, H_{\{A_1,A_2\}}}$, $S$ and $S':=  S|_{K_1, K_2, H_{\{A_1,A_2\}}}$ be as in Definition \ref{reduction} and  Definition \ref{reduction_bis}.
\begin{itemize}
\item[$(i)$] If $S$ is a superset of $\PH(G)$, then $S'$ is a superset of $\PH(G')$.
\item[$(ii)$] If $|K_1|\geq |A_1|$ and $|K_2|\geq |A_2|$, then $|S'| < |S|$ and $S'$ can be built from $S$ in time $O(|V(G)|^2)$.
\end{itemize}
\end{corollary}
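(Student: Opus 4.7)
I would prove part (i) directly from Lemma \ref{lem:C4} together with Definition \ref{reduction_bis}. Fix $\{w_1, w_2\} \in \PH(G')$. By Lemma \ref{lem:C4}.1, $w_1$ and $w_2$ are not both in $A_1 \cup A_2$. If neither lies in $A_1 \cup A_2$, then by Lemma \ref{lem:C4}.2 we have $\{w_1, w_2\} \in \PH(G) \subseteq S$; since $V(G)\setminus(K_1\cup K_2) = V(G')\setminus(A_1\cup A_2)$, the first bullet of Definition \ref{reduction_bis} places $\{w_1, w_2\}$ in $S'$. Otherwise exactly one endpoint is artificial; WLOG $w_1 \in A_1$ (the case $w_1 \in A_2$ is handled analogously via the third bullet of Definition \ref{reduction_bis} instead of the second). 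Lemma \ref{lem:C4}.3 then yields $\{a, w_2\} \in \PH(G) \subseteq S$ for every $a \in K_1$, which is exactly the condition of the second bullet of Definition \ref{reduction_bis}, so $\{w_1, w_2\} \in S'$.

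For part (ii) I organize the count along $V(G) = (V(G) \setminus (K_1 \cup K_2)) \cup K_1 \cup K_2$. Write $S_0 := S \cap \binom{V(G)\setminus(K_1\cup K_2)}{2}$, $S_{\mathrm{in}} := S\cap \binom{K_1 \cup K_2}{2}$, and for $i\in\{1,2\}$ let $S_i^{\mathrm{out}}$ denote the pairs of $S$ with one endpoint in $K_i$ and the other outside $K_1\cup K_2$; set $s_i(y) := |\{a \in K_i : \{a, y\} \in S\}|$ and $n_i := |\{y\notin K_1\cup K_2 : s_i(y) = |K_i|\}|$. Then $|S_i^{\mathrm{out}}| = \sum_{y \notin K_1\cup K_2} s_i(y) \geq n_i |K_i|$, and by Definition \ref{reduction_bis}, $|S'| = |S_0| + |A_1| n_1 + |A_2| n_2$: pairs of $S'$ with both endpoints outside $A_1 \cup A_2$ are inherited one-to-one from $S_0$, each $y$ with $s_i(y) = |K_i|$ produces exactly $|A_i|$ artificial pairs through the second or third bullet, and no pair across $A_1, A_2$ or inside a single $A_i$ is ever generated. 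Combining these with $|A_i| \leq |K_i|$ yields $|S|-|S'| \geq |S_{\mathrm{in}}|$.

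To close the strict inequality $|S'|<|S|$ it now suffices to exhibit a single pair in $S_{\mathrm{in}}$. Under the standing hypothesis $S \supseteq \PH(G)$ inherited from part (i), Lemma \ref{lem:inducedC4} provides an induced $C_4$ in $G[K_1 \cup K_2]$ on vertices $a, b \in K_1$ and $c, d \in K_2$ with edges $ac, bd$ and non-edges $ad, bc$. Then $\{a,b\}\subseteq K_1$ is adjacent and non-universal (certified by $d$) and lies in the $\PH$-pair $\{K_1,K_2\}$, so $\{a, b\} \in \PH(G) \subseteq S_{\mathrm{in}}$.

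For the running time I store $S$ as an $n \times n$ bit matrix and precompute all values $s_i(y)$ in $O(n(|K_1|+|K_2|))=O(n^2)$ time. Each pair of $\binom{V(G')}{2}$ can then be tested against Definition \ref{reduction_bis} in $O(1)$, for a total of $O(|V(G')|^2)=O(n^2)$ by Lemma \ref{lem:chain}. The main subtlety I foresee is the counting step: making sure the hypothesis $|K_i|\geq |A_i|$ absorbs all transformed pairs while leaving room for the pair from $S_{\mathrm{in}}$. Isolating $|S_{\mathrm{in}}|$ via the chain $|A_i|n_i \leq |K_i|n_i \leq |S_i^{\mathrm{out}}|$ handles this cleanly.
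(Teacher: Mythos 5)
Your part (i) is essentially the paper's own proof: the same three-case split via parts (1)--(3) of Lemma \ref{lem:C4}, matched bullet-by-bullet against Definition \ref{reduction_bis}. For part (ii) the paper offers only ``the statement holds easily by construction,'' so your detailed accounting is where you genuinely add something. Your decomposition $|S| = |S_0| + |S_{\mathrm{in}}| + |S_1^{\mathrm{out}}| + |S_2^{\mathrm{out}}|$ together with $|S'| = |S_0| + |A_1|n_1 + |A_2|n_2$ and $|A_i|n_i \le |K_i|n_i \le |S_i^{\mathrm{out}}|$ is correct, and you rightly observe that strictness does \emph{not} follow ``by construction'' alone: if $S$ contained no pair with both endpoints in $K_1\cup K_2$, one could have $|S'|=|S|$. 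Your fix --- importing $S\supseteq \PH(G)$ and extracting a pair $\{a,b\}\subseteq K_1$ from the induced $C_4$ guaranteed by Lemma \ref{lem:inducedC4} --- is sound (the $C_4$ certifies that $a$ and $b$ are mutually non-universal, so $\{a,b\}\in\PH(G)\subseteq S_{\mathrm{in}}$). Strictly speaking the corollary's part (ii) does not state the hypothesis $S\supseteq\PH(G)$, so you are reading in the assumption under which the corollary is actually applied; in Algorithm \ref{alg:phelimin} the same point is settled even more directly, since the pair $\{u,v\}\in S^i$ that triggered the reduction already lies in $K_1^i$, hence in $S_{\mathrm{in}}$. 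The complexity argument (bit-matrix storage, precomputing the counts $s_i(y)$, constant-time test per pair) is standard and fine.
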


\begin{proof}
$(i)$ Pick any pair $\{w_1,w_2\}$ of vertices of $G'$ which admit a $\PH$-embedding in $G'$: by part $(1)$ of Lemma \ref{lem:C4}, they cannot both belong to $A_1\cup A_2$. Suppose that $w_1, w_2 \notin A_1\cup A_2$. Then, by part $(2)$ of Lemma \ref{lem:C4}, $\{w_1,w_2\}$ also have a $\PH$-embedding in $G$ and thus $\{w_1,w_2\}\in S$. Then, by construction, $\{w_1, w_2\} \in S'$. Now, suppose that exactly one of them belongs to $A_1\cup A_2$, w.l.o.g. $w_1$, and let first $w_1\in A_1$; then by part $(3)$ of Lemma \ref{lem:C4}, for each $a \in K_1$, $\{a,w_2\}$ is a pair of vertices with a $\PH$-embedding in $G$, i.e. $\{\{a,w_2\}, a \in K_1\}\subseteq PH(G)\subseteq S$. Then, by construction, $\{w_1,w_2\} \in S'$. A similar argument works for $w_1 \in A_2$. $(ii)$ The statements holds easily by construction.
\end{proof}

\medskip
We are now ready to give our algorithm, see Algorithm \ref{alg:phelimin} in the following. Note that it is fully determined, but for the choice of the non-proper 2-clique graph $H_{\{A^i_1,A^i_2\}}$ to be used in each iteration $i$. In fact, the definition of $H_{\{A^i_1,A^i_2\}}$ will in general depend on $G^i, K_1^i$ and  $K_2^i$: this will be discussed in the next section. Given our previous arguments, it is easy to conclude that Theorem \ref{thr:phelimin_preliminary} correctly predicts the output and the time complexity of Algorithm \ref{alg:phelimin}: we skip details.

Let us remark here that in Algorithm \ref{alg:phelimin} we start with a set $S^0=E(G)$, since we assumed no prior knowledge is available on the pair of vertices of $G$ that are candidate to have a $\PH$-embedding. For specific graphs we may have a better knowledge of those, and consequently start from a set $S^0$ smaller in size. This may lead to asymptotically faster implementations of Algorithm \ref{alg:phelimin}.

\begin{algorithm}
\caption{Eliminating all proper and homogeneous pairs of cliques}
\label{alg:phelimin}
\begin{algorithmic} [1]
\REQUIRE A graph $G$.
\ENSURE A graph $G^q$, without $\PH$ pairs of cliques, that is obtained from $G$ by successive $\PH$ reductions.

\smallskip
\STATE  $i:= 0$; $G^0 := G$; $S^0:= E(G)$;
\WHILE {$S^i$ is non-empty}
	\STATE{pick a pair $\{u, v\}\in S^i$;}
	\STATE{using Algorithm \ref{PH_emb} check whether the pair $\{u, v\}\in S^i$ has a $\PH$-embedding in $G^i$;}
	\IF{$u,v$ have a $\PH$-embedding $\{K^i_1, K^i_2\}$}
			\STATE {let $H_{\{A^i_1,A^i_2\}}$ be a non-proper 2-clique graph vertex-disjoint from $V(G^0)\cup V(G^1) \cup \ldots \cup V(G^{i})$ and such that $|K^i_1|\geq |A^i_1|$ and $|K^i_2|\geq |A^i_2|$;}
			\STATE {$G^{i+1}:= G^i|_{K^i_1, K^i_2, H_{\{A^i_1,A^i_2\}}}$ (see Definition \ref{reduction});}
			\STATE {$S^{i+1}:= S^i|_{K^i_1, K^i_2, H_{\{A^i_1,A^i_2\}}}$  (see Definition \ref{reduction_bis});}
            \STATE {$i:= i+1$;}
	\ELSE
\STATE {remove the pair $\{u,v\}$ from $S^i$;}
\ENDIF
\ENDWHILE
\STATE  $q:= i$.
\RETURN $G^q$.
\end{algorithmic}
\end{algorithm}

\section{Preserving some graph invariant or property}\label{sec:application}

In this section, we show that suitable $\PH$ reductions preserve graph invariants, such as chromatic number, stability number, and clique number, or graph properties, such as perfection, or the property of a graph of being fuzzy circular interval. Most of these reductions were in fact proposed in the literature in specific contexts, but they can actually be embedded in the unifying setting of $\PH$ reductions.

In some cases \cite{eo, kph, kr, OPS08} the reductions that were used have the following form: take a $\PH$ pair of cliques $\{K_1, K_2\}$ and remove some suitable set of edges between vertices of $K_1$ and vertices of $K_2$ so that, in particular, in the resulting graph,
no $C_4$ is contained in the subgraph induced by $K_1\cup K_2$. In another case \cite{OPS08bis} the reduction has the following form: take a $\PH$ pair of cliques $\{K_1, K_2\}$ and add all possible edges between vertices of $K_1$ and vertices of $K_2$ but one. It is easy to show that all those types of reductions can be interpreted in terms of our $\PH$ reduction, so we skip such details when presenting them. Therefore, they can be embedded into the iterative framework of Algorithm \ref{alg:phelimin}, and one may rely on the complexity bound given by Theorem \ref{thr:phelimin_preliminary}.

\smallskip
We begin with a reduction introduced by King and Reed \cite{kph, kr} for removing edges in a $\PH$ pair of cliques while preserving the chromatic number. Recall that $\chi(G)$ denotes the chromatic, $\chi_f(G)$ the fractional, and $\omega(G)$ the clique number of a graph $G$.

\begin{lemma}\label{lem:coloring}\cite{kph}
Let $G$ be a graph and suppose that we are given a $\PH$ pair of cliques $\{K_1, K_2\}$ of $G$. Also, let $X$ be a maximum clique in $G[K_1 \cup K_2]$, and let $G'$ be the graph obtained from $G$ by removing each edge $uv\in E(G)$ such that: $u\in K_1$; $v\in K_2$; $\{u, v\} \not\subseteq X$.  Then:
\begin{itemize}
\item[(i)] $G'$ can be built in time $O(|V(G)|^\frac{5}{2})$ (from the knowledge of $G$, $K_1$ and $K_2$);
\item[(ii)] $\chi(G)=\chi(G')$, $\chi_f(G)=\chi_f(G')$ and each $k$-coloring of $G'$ can be extended into a $k$-coloring of $G$ of in time $O(|V(G)|^\frac{5}{2})$.
\item[(iii)] $\omega(G)=\omega(G')$, and each clique of $G'$ is also a clique of $G$.
\item[(iv)] If $G$ is claw-free (resp. quasi-line; perfect), then $G'$ is claw-free (resp. quasi-line; perfect).
\end{itemize}
\end{lemma}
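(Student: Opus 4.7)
The plan is to verify parts (i)--(iv) in turn, following \cite{kph}. For part (i), constructing $G'$ from $G,K_1,K_2$ requires computing a maximum clique $X$ of $G[K_1\cup K_2]$. Since $K_1$ and $K_2$ are cliques, $\overline{G}[K_1\cup K_2]$ is a bipartite graph with parts $K_1,K_2$, and a maximum clique in $G[K_1\cup K_2]$ corresponds to a maximum independent set there. By K\"onig's theorem this reduces to bipartite maximum matching, solvable via Hopcroft--Karp in $O((|K_1|+|K_2|)^{5/2})=O(|V(G)|^{5/2})$ time; the edge-removal step then costs $O(|V(G)|^2)$.

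For (iii), since $E(G')\subseteq E(G)$, any clique of $G'$ is a clique of $G$ and $\omega(G')\le \omega(G)$. Conversely, given a maximum clique $Q$ of $G$, set $Q':=Q\cap (K_1\cup K_2)$. If $Q'\subseteq K_i$ for some $i$, then $Q$ remains a clique in $G'$. Otherwise $Q'$ is a clique of $G[K_1\cup K_2]$ meeting both $K_1$ and $K_2$, so $|Q'|\le |X|$; by homogeneity of $\{K_1,K_2\}$, each vertex in $Q\setminus Q'$ is complete to both $K_1$ and $K_2$, hence $(Q\setminus Q')\cup X$ is a clique of $G$ of size $\ge |Q|$ that remains a clique in $G'$ because the edges of $X$ are preserved.

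For (ii), the inequalities $\chi(G')\le\chi(G)$ and $\chi_f(G')\le\chi_f(G)$ are immediate; the substance lies in the reverse directions. Given a proper $k$-coloring $c$ of $G'$, retain $c$ on $V(G)\setminus(K_1\cup K_2)$ and recolor $K_1\cup K_2$. By homogeneity, the constraints from the outside on $K_i$ depend only on the color multiset $S_i:=c(K_i)$ (with $|S_i|=|K_i|$), not on which vertex of $K_i$ bears which color. It therefore suffices to permute colors within $K_1$ and within $K_2$ so that the resulting coloring is proper in $G[K_1\cup K_2]$. This is possible because $G[K_1\cup K_2]$, being the complement of a bipartite graph, is perfect with $\chi(G[K_1\cup K_2])=\omega(G[K_1\cup K_2])=|X|$; a matching argument in the bipartite complement (identifying which $K_1$-colors may be shared with which $K_2$-colors) locates a feasible permutation, computable in $O(|V(G)|^{5/2})$ time via Hopcroft--Karp. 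The fractional version follows by applying the same recipe to an optimal fractional (multi)coloring.

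Statement (iv) is verified property-by-property by contradiction. For claw-freeness, a hypothetical claw $(a;b,c,d)$ in $G'$ not present in $G$ must use at least one non-edge of $G'$ that is an edge of $G$; such non-edges lie strictly between $K_1$ and $K_2$, outside $X\times X$. A short case analysis on the positions of $a,b,c,d$ relative to $K_1,K_2$ and $X$, using both the homogeneity of $\{K_1,K_2\}$ and the maximality of $X$ (which forces each removed edge to have a compatible ``partner'' in $X$), produces a claw already in $G$. The quasi-line case is analogous by local application of claw-freeness to $N(v)$ for each $v$, and perfection is inherited because both $G[K_1\cup K_2]$ and $G'[K_1\cup K_2]$ are perfect and homogeneity with $V(G)\setminus(K_1\cup K_2)$ precludes new odd holes or antiholes. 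The main obstacle is expected to be part (ii): producing an admissible color permutation within $K_1\cup K_2$ requires a delicate matching argument, and this is the point at which the maximality of $X$ (rather than just any clique in $G[K_1\cup K_2]$) is essential.
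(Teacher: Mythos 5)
The paper does not prove this lemma: it is quoted from King's thesis \cite{kph} (with the remark that item (iii) is not stated there but is ``almost straightforward''), so there is no in-paper argument to compare against. Your proposal is a faithful and essentially correct reconstruction of the standard proof: part (i) via K\"onig/Hopcroft--Karp on the bipartite complement, part (iii) via the substitution $Q \mapsto (Q\setminus Q')\cup X$ using homogeneity, part (ii) via recoloring only inside $K_1\cup K_2$, and part (iv) via case analysis on where the claw meets $K_1\cup K_2$.

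The one place where your sketch should be made precise is the matching step in (ii). Invoking perfection of $G[K_1\cup K_2]$ is not quite the right statement, because you must realize the \emph{specific} color sets $S_1=c(K_1)$ and $S_2=c(K_2)$ imposed by the outside (homogeneity forces each $K_i$ to reuse exactly $S_i$), not merely some $|X|$-coloring. The argument that works is: a color shared by $K_1$ and $K_2$ must be placed on a non-edge of $G$ between $K_1$ and $K_2$, and distinct shared colors need vertex-disjoint such non-edges, i.e.\ a matching of size $|S_1\cap S_2|$ in $\overline{G}[K_1\cup K_2]$. By K\"onig the maximum such matching has size $|K_1|+|K_2|-|X|$, while $|S_1\cap S_2| = |K_1|+|K_2|-|S_1\cup S_2| \leq |K_1|+|K_2|-|X|$ because $X$ survives as a clique in $G'$ and hence forces $|S_1\cup S_2|\geq |X|$. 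This is exactly where the maximality of $X$ enters, as you correctly anticipate; with this inequality written out, your outline of (ii) is complete, and the remaining parts are routine.
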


(One should mention that Lemma \ref{lem:coloring} can be extended to the case where $\{K_1, K_2\}$ is a {\em nonskeletal} and homogeneous pair of cliques \cite{kph}. Also, Andrew King \cite{kpe} pointed us that this lemma is non-trivially implied by some proofs in \cite{CO}. In that paper, Chudnovsky and Ovetsky introduce another reduction for $\PH$ pairs of cliques, which is quite similar to the one above. This reduction preserves quasi-liness, while not increasing the clique number of $G$. It is a simple exercise to show that the reduction in \cite{CO} can be interpreted in terms of our $\PH$ reduction. Finally, we mention that proposition $(iii)$ of Lemma \ref{lem:coloring} is not stated in \cite{kph}, but it is almost straightforward.)

\smallskip

By embedding the reduction above in the iterative framework of Algorithm \ref{alg:phelimin}, we can reduce the problem of computing the chromatic (resp. clique) number on a given graph $G$ to the same problem on a graph $G'$ without $\PH$ pairs of cliques. 

\begin{corollary}\label{cor:iterat-clique-col}
From a graph $G$ one can obtain in time $O(|V(G)|^\frac{5}{2}|E(G)|)$ a graph $G'$ without $\PH$ pairs of cliques such that $\chi(G)=\chi(G')$ and $\omega(G)=\omega(G')$. One can also derive an optimal coloring of $G$ from an optimal coloring in $G'$ in time $O(|V(G)|^\frac{5}{2}|E(G)|)$, while a maximum clique in $G'$ is also a maximum clique in $G$.
\end{corollary}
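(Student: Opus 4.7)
The plan is to invoke Algorithm \ref{alg:phelimin} using, at each iteration $i$, the non-proper 2-clique $H_{\{A^i_1, A^i_2\}}$ whose underlying graph is the one produced by the reduction of Lemma \ref{lem:coloring} applied to $G^i$ and $\{K^i_1, K^i_2\}$. As noted at the beginning of Section \ref{sec:application}, edge-deletion reductions of this form fit into the $\PH$-reduction framework, so this is a legitimate instantiation. Moreover, since the reduction of Lemma \ref{lem:coloring} only deletes edges between $K^i_1$ and $K^i_2$ (leaving $K^i_1$ and $K^i_2$ themselves intact), we have $|A^i_j| = |K^i_j|$ for $j=1,2$, so the size hypothesis in line 6 of Algorithm \ref{alg:phelimin} is met.

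For the complexity bound, Lemma \ref{lem:coloring}(i) yields $p(i) = O(|V(G^i)|^{5/2})$, and Lemma \ref{lem:chain} guarantees $|V(G^i)| \leq |V(G)|$, so $p(i) = O(|V(G)|^{5/2})$. Since $q \leq |E(G)|$ by Theorem \ref{thr:phelimin_preliminary}, the total cost $\sum_{i=1}^q p(i)$ is bounded by $O(|V(G)|^{5/2}|E(G)|)$, which dominates the $O(|V(G)|^2|E(G)|)$ overhead from Theorem \ref{thr:phelimin_preliminary} and yields the claimed running time.

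For correctness and recovery, parts (ii) and (iii) of Lemma \ref{lem:coloring} guarantee that each single reduction preserves both $\chi$ and $\omega$; by transitivity along the sequence $G^0, G^1, \dots, G^q$ we obtain $\chi(G) = \chi(G')$ and $\omega(G) = \omega(G')$. To lift an optimal coloring from $G' = G^q$ back to $G$, I would walk through $G^{q-1}, G^{q-2}, \dots, G^0 = G$, at each step extending the coloring via Lemma \ref{lem:coloring}(ii) in $O(|V(G)|^{5/2})$ time, for a total of $O(|V(G)|^{5/2}|E(G)|)$. Iterating part (iii) of Lemma \ref{lem:coloring} shows that every clique of $G^{i+1}$ is a clique of $G^i$, hence every clique of $G'$ is a clique of $G$; combined with $\omega(G)=\omega(G')$, a maximum clique of $G'$ is already a maximum clique of $G$ with no extra work. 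The only non-bookkeeping point to verify is that the output of Lemma \ref{lem:coloring} is a valid non-proper 2-clique in the sense of Definition \ref{gadget} — that is, that $G'[K^i_1 \cup K^i_2]$ is $C_4$-free — but this is exactly what that lemma encodes; no further obstacle arises.
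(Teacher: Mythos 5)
Your proposal is correct and matches the paper's (unstated, but clearly intended) argument: instantiate each iteration of Algorithm \ref{alg:phelimin} with the non-proper 2-clique given by Lemma \ref{lem:coloring}, bound the cost via Theorem \ref{thr:phelimin_preliminary} with $p(i)=O(|V(G)|^{5/2})$, and chain parts (ii) and (iii) of Lemma \ref{lem:coloring} across the at most $|E(G)|$ reductions. Your explicit checks that $|A^i_j|=|K^i_j|$ and that the reduced pair is indeed $C^{free}_4$ are details the paper leaves to the reader, and they go through.
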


As argued by Li and Zang \cite{LZ}, the maximum weighted clique problem in the complement of a bipartite graph can be reduced to maximum flow, and hence solved in time $O(n^3)$. By building on the latter fact (and slightly increasing the complexity), Corollary \ref{cor:iterat-clique-col} can be extended to the computation of a graph $G'$ without $\PH$ cliques that preserves the maximum weighted clique and its value.

\smallskip
Consider now the maximum weighted stable set problem. Oriolo, Pietropaoli, and Stauffer \cite{OPS08bis} provide a reduction that preserves the value of a maximum weighted stable set. (We refer to \cite{OPS08bis} for more details and for the precise definition of the reduction, which is actually stated for the more general class of {\em semi-homogeneous} pairs of cliques.) By embedding their reduction in Algorithm \ref{alg:phelimin}, we obtain the following lemma:

\begin{corollary}\label{cor:weighreduction}
Let $G(V,E)$ be a graph with a weight function $w: V\mapsto{\mathbb R}$ defined on its vertices. In time $O(|V(G)|^2|E(G)|)$ one can build a graph $G'$ without $\PH$ pairs of cliques such that a maximum weighted stable set of $G'$ is also a maximum weighted stable set of $G$.
\end{corollary}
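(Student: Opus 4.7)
The plan is to package the $\PH$ reduction from \cite{OPS08bis} as a specific choice of non-proper $2$-clique $H_{\{A_1,A_2\}}$ in Algorithm \ref{alg:phelimin}, and then invoke Theorem \ref{thr:phelimin_preliminary} to bound the running time. First I would recall, from \cite{OPS08bis}, that given a $\PH$ pair $\{K_1,K_2\}$ of a weighted graph $G$, one can preserve the maximum weighted stable set value by replacing $G[K_1 \cup K_2]$ with the cobipartite graph on $K_1 \cup K_2$ having all edges between $K_1$ and $K_2$ except one edge $\{u,v\}$, $u \in K_1$, $v \in K_2$, where $\{u,v\}$ is chosen to be a maximum weight stable set of $G[K_1 \cup K_2]$.

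Next I would show that this matches Definition \ref{reduction} by taking $A_1 := K_1$, $A_2 := K_2$ and letting $H_{\{A_1,A_2\}}$ be the described ``complete bipartite minus one edge'' graph on $A_1 \cup A_2$. This pair is trivially $C_4^{free}$ and non-proper, so $H_{\{A_1,A_2\}}$ is a valid non-proper $2$-clique, and the condition $|K_i| \geq |A_i|$ required in Algorithm \ref{alg:phelimin} holds with equality. Correctness is then a direct induction: at each step, by the property of the reduction in \cite{OPS08bis}, any maximum weighted stable set of $G^{i+1}$ can be identified (via $A_j^i \equiv K_j^i$) with a maximum weighted stable set of $G^i$ of the same weight; chaining through the $q \leq |E(G)|$ iterations, any maximum weighted stable set of $G^q$ is also a maximum weighted stable set of $G$.

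Finally I would address the running time. For the bound in Theorem \ref{thr:phelimin_preliminary} to yield $O(|V(G)|^2|E(G)|)$ overall, it suffices that $p(i) = O(|V(G)|^2)$ at each iteration, so that $\sum_{i=1}^q p(i) = O(|V(G)|^2|E(G)|)$ is dominated by the base term. The construction of $H_{\{A_1^i,A_2^i\}}$ amounts to (a) listing the $O(|V(G)|^2)$ edges of the near-complete cobipartite graph, and (b) identifying the missing edge, i.e.\ computing a maximum weighted stable set of the cobipartite graph $G^i[K_1^i \cup K_2^i]$. The main obstacle is step~(b): a naive max-flow computation in a cobipartite graph would give $O(|V(G)|^3)$ per iteration, which is too slow. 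I would therefore exploit the specific structure of $\PH$ pairs (where Lemma \ref{universal} forces a quasi-threshold-like ordering on each clique when one looks at non-neighbors across the pair) to reduce the problem to a scan of $O(|V(G)|^2)$ candidate non-edges, as argued in \cite{OPS08bis}. Verifying this structural fact and the resulting $O(|V(G)|^2)$ per-step bound is the technical core of the proof; once in hand, the corollary follows directly by applying Theorem \ref{thr:phelimin_preliminary}.
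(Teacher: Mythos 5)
Your proposal follows essentially the same route as the paper: embed the ``all cross edges but one'' reduction of \cite{OPS08bis} as a non-proper $2$-clique with $A_i = K_i$ in Algorithm \ref{alg:phelimin}, and invoke Theorem \ref{thr:phelimin_preliminary} with $p(i)=O(|V(G)|^2)$. Your worry about step (b) is easily dispatched without any max-flow or threshold-ordering machinery, since a stable set in a cobipartite graph has at most two vertices and the best cross non-edge is found by a direct $O(|V(G)|^2)$ scan.
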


Interestingly, if we now move from the maximum weighted stable set problem to the stable set polytope $STAB(G)$ of a graph $G$, we can also embed a result in \cite{eo} in our framework. Eisenbrand et al. show -- see the remark following Lemma 5 in \cite{eo} -- that each facet of the stable set polytope $STAB(G)$ is also a facet of another graph $G'$ (obtained from $G$ by removing edges) that does not contain any $\PH$ pair of cliques. As one easily checks (cfr. the proof of Lemma 5 in \cite{eo}), also their result can be phrased in the framework of Algorithm \ref{alg:phelimin}.

\medskip
We now move from graph invariants to graph properties. First, Oriolo, Pietropaoli, and Stauffer \cite{OPS08} show that a suitable reduction of $\PH$ pairs of cliques preserves the property of a graph of being, or not being, a fuzzy circular interval graph, and they exploit this fact in an algorithm for recognizing fuzzy circular interval graphs. Their reduction can also be embedded in our framework. In fact, Theorem \ref{thr:phelimin_preliminary} is already used in \cite{OPS08} for bounding the complexity of the recognition algorithm. Finally, every $\PH$ reduction preserves perfection, and under very general conditions it does not turn a non-perfect graph into a perfect one. We give just a sketch of the proof of the latter fact, since the arguments used are quite standard.

\begin{lemma}\label{lem:perf}
Let $G$ be a perfect graph with a $\PH$ pair of cliques $\{K_1, K_2\}$. Also let $H_{\{A_1,A_2\}}$ be a non-proper 2-clique graph vertex-disjoint from $G$. Then the graph $G|_{K_1, K_2, H_{\{A_1,A_2\}}}$ is perfect. The converse implication holds true if $A_1$ is not anticomplete to $A_2$.
\end{lemma}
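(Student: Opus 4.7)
The plan is to apply the Strong Perfect Graph Theorem: a graph is perfect if and only if it contains no induced odd hole or odd antihole of length at least five. Set $G' := G|_{K_1, K_2, H_{\{A_1,A_2\}}}$. Both implications will be proved by contrapositive, with the same toolkit: a bound on how many vertices a forbidden cycle can contain from a single clique of the homogeneous pair, followed by a length-preserving lifting (forward) or translation (converse) between $G$ and $G'$.

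For the forward direction, suppose $C$ is an odd hole or antihole of $G'$. I claim $|V(C) \cap A_i| \leq 1$ for $i=1,2$. Indeed, two distinct $a, a' \in V(C) \cap A_1$ are universally related in $G'$ by Lemma \ref{lem:chain}; combined with the fact that $a, a'$ are adjacent (they lie in the clique $A_1$), this implies that $a$ and $a'$ have the same set of non-neighbors in $C$. But in any hole or antihole of length at least five, distinct vertices have distinct non-neighborhood patterns within the cycle, so $a=a'$, a contradiction. Given this bound, I replace each artificial vertex of $C$ by a vertex of the corresponding clique $K_i$: homogeneity of $\{K_1, K_2\}$ preserves every edge to the non-artificial portion of $C$, and when $C$ meets both $A_1$ and $A_2$, the proper-pair property of $\{K_1, K_2\}$ supplies $k_1 \in K_1$ and $k_2 \in K_2$ with $k_1 k_2 \in E(G)$ or $k_1 k_2 \notin E(G)$, according as the two artificial vertices of $C$ are $C$-adjacent or not. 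The result is an odd hole or antihole of $G$ of the same length, contradicting perfection of $G$.

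For the converse direction, assume $A_1$ is not anticomplete to $A_2$ and let $C$ be an odd hole or antihole of $G$. Now the bound $|V(C) \cap K_i| \leq 1$ comes directly from the homogeneity of $\{K_1, K_2\}$ in $G$: two distinct $u, u' \in V(C) \cap K_1$ share the same neighborhood in $V(G) \setminus (K_1 \cup K_2)$, so their non-neighbors in $C$ can only differ on $V(C) \cap K_2$. A short case analysis finishes: if the symmetric difference of the two non-neighbor sets involves a pair of $K_2$-vertices at cyclic distance at least $2$ in the underlying cycle, the edge of the clique $K_2$ they span becomes a chord of $C$; the remaining small configurations force either a chord or, for longer antiholes, a third $K_1$-vertex in $V(C)$ to which the same incompatibility argument applies. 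With this bound in hand, I translate $C$ to $G'$ by replacing each $u \in V(C) \cap K_i$ with a suitable $a \in A_i$: the hypothesis $A_1$ not anticomplete to $A_2$ together with the condition $A_1$ not complete to $A_2$ built into Definition \ref{c_f-free} guarantees that both an edge and a non-edge between $A_1$ and $A_2$ are present in $H_{\{A_1,A_2\}}$, providing exactly the flexibility needed to match the $C$-adjacency between the selected $A_1$- and $A_2$-vertices.

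The main obstacle is the antihole part of the bound $|V(C) \cap K_i| \leq 1$ for $G$ in the converse direction, since no direct analogue of Lemma \ref{lem:chain} is available inside $G$. The argument relies on tracking the symmetric difference of the two non-neighborhood pairs of $u, u' \in V(C) \cap K_1$, which by homogeneity must lie entirely in $K_2$; the ensuing configurations either force a chord of $C$ among $K_2$-vertices (contradicting the clique structure of $K_2$) or spawn a third $K_1$-vertex whose homogeneity constraints with $u$ and $u'$ cannot be simultaneously satisfied. Once this bound is settled, the replacement step is routine given homogeneity, the proper-pair condition, Definition \ref{c_f-free}, and the non-anticompleteness hypothesis.
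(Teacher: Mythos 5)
Your proposal follows essentially the same route as the paper's (sketched) proof: invoke the Strong Perfect Graph Theorem, establish that a long odd hole or antihole meets each clique of a homogeneous pair in at most one vertex, and then transfer the cycle between $G$ and $G'$ by swapping that single vertex, using homogeneity, properness, and (for the converse) the presence of both an edge and a non-edge between $A_1$ and $A_2$. The only place where your write-up is looser than it should be is the antihole half of the bound $|V(C)\cap K_i|\le 1$ in the converse direction, which really requires a cascading argument (vertices distinguishing two $K_1$-vertices are forced into $K_2$, their distinguishers back into $K_1$, etc., until the odd length yields a contradiction) rather than a single chord observation --- but this is the same standard fact the paper itself asserts without proof, so your argument is correct and aligned with theirs.
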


\begin{proof}
Recall that a graph is perfect if and only if it contains neither long odd holes, nor long odd anti-holes, {\em long} meaning of length at least $5$~\cite{CS_perfect}. Let $\{Q_1,Q_2\}$ be a homogeneous pair of cliques in a graph $G$: it is easy to show that each long odd-hole (resp. each long odd anti-hole) of $G$ takes at most one vertex from $Q_1$ and at most one vertex from $Q_2$.

Suppose first that $G'=G|_{K_1, K_2, H_{\{A_1,A_2\}}}$ is not perfect, i.e. there is an induced subgraph $H'$ of $G'$ that is either a long odd-hole or a long anti-hole. By building on the fact that $|V(H')\cap A_1|\leq 1$ and $|V(H')\cap A_2|\leq 1$, one can easily construct an odd-hole (resp. an odd anti-hole) of $G$ from $H'$, thus showing that $G$ is not perfect as well. Let now $A_1$ be not anticomplete to $A_2$ in $G'$; then, one can analogously show that if $G$ is not perfect, neither is $G'$.
\end{proof}

\medskip

We conclude by pointing out that, with the exception of the reduction from Lemma \ref{lem:coloring} (since $X \subseteq K_1$ or $X \subseteq K_2$ may happen), all the reductions from the current section do not turn an imperfect graph into a perfect one.

\bigskip\noindent
{\bf Note.} While preparing this paper, we became aware that M. Chudnovsky and A. King independently found a result similar to Theorem \ref{thr:phelimin_preliminary}, even though they reduce all proper and homogeneous pairs of cliques at once \cite{CK10}.

\section*{Acknowledgments} We thank Andrew King for reading a previous version of this paper and the anonymous referee for his/her comments. Yuri Faenza's research was supported by the Progetto di Eccellenza 2008-2009 of the Fondazione Cassa di Risparmio di Padova e Rovigo.


\begin{thebibliography}{5}

\footnotesize \baselineskip = 10pt

\bibitem{CK10}
M. Chudnovsky, and A. King.
\newblock \emph{Personal Communication}, 2010.

\bibitem{CO}
M. Chudnovsky, and A. Ovetsky. Coloring quasi-line graphs. \emph{J. Graph Theory}, Vol. 54, 41-50, 2007.

\bibitem{CS_perfect}
M. Chudnovsky, N. Robertson, P. Seymour, and R. Thomas. The Strong Perfect Graph Theorem. \emph{Ann. Math.}, Vol.164, 51-229, 2006.

\bibitem{CS_Survey}
M. Chudnovsky, and P. Seymour. The Structure of Claw-free
Graphs. {\em Surveys in Combinatorics 2005,  London Math. Soc.
Lecture Note Series}, Vol. 327, 153-171, 2005.

\bibitem{CS_global}
M. Chudnovsky, and P. Seymour. Claw free Graphs V. Global structure. {\em J. Comb. Theory, Ser. B}, Vol. 98, 1373-1410, 2008.

\bibitem{CSb}
V. Chv\'atal, and N. Sbihi. Bull-free Berge graphs are perfect. {\em Graphs and Combinatorics}, Vol. 3, 127-139, 1987.

\bibitem{eo}
F. Eisenbrand, G. Oriolo, G. Stauffer, and P. Ventura. Circular One
Matrices and the Stable Set Polytope of Quasi-Line Graphs. \emph{Combinatorica},
Vol. 28(1), 45-67, 2008.

\bibitem{kph}
A.D. King. Claw-free graphs and two conjectures on $\omega, \Delta$ and $\chi$. PhD Thesis, Mc Gill University, Montreal, 2009.

\bibitem{kpe}
A. King.
\newblock \emph{Personal Communication}, 2010.

\bibitem{kr}
A.D. King, and B.A. Reed. Bounding $\chi$ in Terms of $\omega$
and $\Delta$ for Quasi-Line Graphs. \emph{J. Graph Theory},
Vol. 59, 215-228, 2008.

\bibitem{LZ}
X. Li, and W. Zang. A Combinatorial Algorithm for Minimum Weighted
Colorings of Claw-Free Perfect Graphs. \emph{J. Comb. Opt.}, Vol. 9, 331-347, 2005.

\bibitem{OPS08bis}
G. Oriolo, U. Pietropaoli, and G. Stauffer. A new algorithm for the maximum weighted stable set problem in claw-free graphs. In A. Lodi, A. Panconesi and G. Rinaldi, editors, {\em Proceedings Thirteenth IPCO Conference}, 77-96, 2008.

\bibitem{OPS08}
G. Oriolo, U. Pietropaoli, and G. Stauffer. On the Recognition of Fuzzy Circular Interval Graphs. {\em Submitted Manuscript}, 2010.

 \bibitem{Pi}
U. Pietropaoli. Some classical combinatorial problems on circulant and claw-free graphs. PhD Thesis, Universit\`a di
Roma La Sapienza, 2008. An extended abstract appeared in {\em 4OR}, Vol. 7(3),
297-300, 2009.

\end{thebibliography}
\end{document}